\definecolor{aliceblue}{rgb}{0.9, 0.95, 1.0}
\newcommand{\C}{{\mathbb C}}
\newcommand{\D}{{\mathbb D}}
\newcommand{\HH}{{\mathbb H}}
\newcommand{\Te}{Teichm\"{u}ller }
\newcommand\T{{\mathcal T}}
\newtheorem{theorem}{Theorem}[section]
\newtheorem{prop}[theorem]{Proposition}
\newtheorem*{jthm}{Theorem}
\newtheorem{thm}{Theorem}[section]
\newtheorem{lem}[thm]{Lemma}
\theoremstyle{definition}
\newtheorem{defn}[thm]{Definition}
\newcommand{\cp}{\mathbb{C}\mathrm{P}^1}
\title[Quadratic differentials, measured foliations and metric graphs]{Quadratic differentials, measured foliations\\ and metric graphs on punctured surfaces}
\author{Kealey Dias}
\author{Subhojoy Gupta}
\author{Maria Trnkova}
\address{Department of Mathematics and Computer Science, Bronx Community College of the City University of New York, Bronx, NY 10453 USA.}
\email{kealey.dias@bcc.cuny.edu}
\address{Department of Mathematics, Indian Institute of Science, Bangalore 560012, India.}
\email{subhojoy@iisc.ac.in}
\address{Department of Mathematics, University of California, Davis, CA 95616, USA.}
\email{mtrnkova@math.ucdavis.edu}
\begin{document}

\begin{abstract}  
A meromorphic quadratic differential on a punctured Riemann surface induces horizontal and vertical measured foliations with pole-singularities. In a neighborhood of a pole such a foliation comprises foliated strips and half-planes, and its  leaf-space determines a metric graph. 
We introduce the notion of an asymptotic direction at each pole, and show that for a punctured surface equipped with a choice of such asymptotic data, any compatible pair of measured foliations uniquely determines a complex structure and a meromorphic quadratic differential realizing that pair.
This proves the analogue of a theorem of Gardiner-Masur, for meromorphic quadratic differentials. We also prove an analogue of the Hubbard-Masur theorem, namely, for a fixed punctured Riemann surface there exists a meromorphic quadratic differential with any prescribed horizontal foliation, and such a differential is unique provided we prescribe the singular-flat geometry at the poles.

\end{abstract}

\maketitle

\section{Introduction}
A holomorphic quadratic differential on a Riemann surface has associated coordinate charts with transition maps that are half-translations ($z\mapsto \pm z+c$). This induces a \textit{singular-flat structure} on the surface, namely, a flat metric with conical singularities, together with a pair (horizontal and vertical) of \textit{measured foliations}. These structures have been useful in  Teichm\"{u}ller theory, and the study of the mapping class group of a surface (see \cite{FLP}).
The correspondence between these analytical objects (the differentials) and their induced geometric structures is well-understood for a closed surface. In particular,  the work of Hubbard-Masur in \cite{HubbMas} proved that for a fixed compact Riemann surface  $X$ of genus $g\geq 2$,  assigning the induced horizontal (or vertical) foliation to a holomorphic quadratic differential defines a homeomorphism between the space $Q(X)$, and the space of measured foliations $\mathcal{MF}_g$. Moreover, in \cite{GardMas} Gardiner-Masur proved that the pair of horizontal and vertical foliations uniquely determines the complex structure and holomorphic quadratic differential inducing those foliations (see Theorem 3.1 in that paper).  
The main result in this article is the analogue of the Gardiner-Masur theorem for surfaces with punctures.

The analogue of the Hubbard-Masur theorem has been extended to the case of meromorphic quadratic differentials on a punctured surface $S$ of negative Euler characteristic, by the work in  \cite{GuptaWolf2} (that deals with higher order poles at the punctures) and \cite{GuptaWolf0}  (that deals with poles of order two). 
It is well-known that poles of order one (i.e. simple poles) can be reduced to the classical theory (i.e. the holomorphic case) by taking a branched double cover; we therefore consider poles of order greater than one throughout.  In the work of Gupta-Wolf, the behaviour of the measured foliations  at a pole-singularity was analyzed in terms of the ``principal part" of the differential.  This paper develops a more constructive approach in terms of various cut-and-paste operations; as an application we provide an alternative generalization of the Hubbard-Masur theorem in terms of the singular-flat geometry at the poles.

\subsection*{Sphere with $\leq 2$ punctures} We shall first focus on the case when $S$ is the sphere with at most two punctures, that is, $S= \C$ or $\C^\ast$. As a special case of the  classical Three-Pole theorem (see \cite{Jenkins2}),  the trajectory structure of a meromorphic quadratic differential on $S$ comprises foliated strips and half-planes. Thus, the induced measured foliations can be described in terms of their leaf-spaces, that are metric graphs on the punctured sphere. These metric graphs have $(n-2)$  infinite-length edges incident on any puncture of order $n\geq 3$, and a loop or infinite edge for each pole of order $2$ (see \S2.3 for details). 

The case that $S=\C$ was dealt with in \cite{AuWan2}; in this case the holomorphic quadratic differential necessarily has a pole of order $n\geq 4$ at $\infty$, and has the form $p(z) dz^2$ where $p$ is a polynomial of degree $n-4$.  By a conformal change of coordinates, it can be arranged that the polynomial is \textit{monic}, namely, that the leading coefficient is $1$, and \textit{centered}, namely, that the zeroes of the polynomial have vanishing mean. The leaf-spaces of the induced measured foliations are then planar trees, and the result for this case can be summarized as follows (see \S3 for details):

 \begin{jthm}[Au-Wan, \cite{AuWan2}]
 The space $\mathcal{MF}_0(n)$ of the measured foliations on $\cp$ with a single pole-singularity of order $n > 4$ at $\infty$ admits a bijective correspondence with the space $\mathcal{T}(n-2) $ of planar metric trees with $(n-2)$ labelled infinite rays incident at $\infty$, and $\mathcal{MF}_0(n) \cong \mathcal{T}(n-2) \cong \mathbb{R}^{n-5}$. 
 
Moreover, let $Q_0(n)\cong  \mathbb{C}^{n-5}$ be the space of monic and centered polynomial quadratic differentials of degree $n-4$. Then the map
\begin{equation*}\label{mapPhi}
\Phi_1: Q_0(n) \to \mathcal{MF}_0(n) \times \mathcal{MF}_0(n)
\end{equation*}
that assigns to a polynomial quadratic differential its associated horizontal and vertical foliations, is a homeomorphism.
\end{jthm}

\noindent \textit{Remark.} The space of measured foliations  $\mathcal{MF}_0(n)$ decomposes into regions corresponding to the different combinatorial types of planar trees with labelled ends, and there is exactly a Catalan number of them. This is closely related to the classification of the trajectory-structure for polynomial vector fields on $\mathbb{C}$ (see \cite{BranDias}, \cite{Dias}, \cite{DES} and the references therein). One of the differences is that a foliation induced by a quadratic differential is typically not orientable.   \\

For $S= \C^\ast$, let $n,m\geq 2$ denote the orders of poles at $0$ and $\infty$ respectively.  A meromorphic quadratic differential on $\C^\ast$ with poles of these prescribed orders has the form 
\begin{equation}
q= \frac{p(z)}{z^n} dz^2
\end{equation}
where $p$ is a polynomial of degree $n+m-4$ such that $p(0) \neq 0$.

As we shall see in \S2.1, the argument of the leading order coefficient at the poles determines the asymptotic directions of the induced foliations at the poles. At a pole of order two, this asymptotic direction is the ``slope" of the leaves when lifted to the universal cover $\HH$ of a neighborhood of the pole. At each higher order pole, the asymptotic direction of a single horizontal leaf determines the complete set of asymptotic directions of horizontal as well as vertical leaves.  If we prescribe this asymptotic data,  the remaining coefficients of $p(z)$ and the  modulus of the leading order term at each pole, parametrize the space of such quadratic differentials $Q_0(n,m) \cong \mathbb{R} \times \mathbb{C}^{n+m-5}\times \mathbb{R} $ provided $n+m >4$. 

Our first result is:

\begin{thm}\label{thm1}  Let $n,m\geq 2$ such that $n,m$ are not both equal to $2$. Let $\mathcal{MF}_0(n,m)$ be the space of  measured foliations on $\cp$ with a pole-singularity of order $n$ at $0$ and of order $m$ at $\infty$,  with prescribed asymptotic data at the poles. 
Let 
\begin{equation*}
\Phi_2: Q_0(n,m) \to \mathcal{MF}_0(n,m) \times \mathcal{MF}_0(n,m)
\end{equation*}
be the map that assigns to a quadratic differential with prescribed asymptotic data,  its induced horizontal and vertical measured foliations. Then $\Phi_2$ defines a homeomorphism to the  subspace comprising pairs of foliations that 
\begin{itemize} 
\item do not both have transverse measure zero around the punctures, and 
\item in case that either $n$ or $m$ equals two, and both foliations have positive transverse measures around the punctures, then the two transverse measures are compatible with the prescribed asymptotic direction at the pole of order two (see Definition \ref{compat}). 
\end{itemize}
\end{thm}

\medskip

The key part in the proof of Theorem \ref{thm1} is defining an inverse map to $\Phi_2$ (see \S4.2). This  uses a decomposition of the measured foliations on $\C^\ast$ into ``model foliations" on neighborhoods of the two punctures, and the remaining annulus.  The desired meromorphic quadratic differential is then constructed by assembling the singular-flat surfaces that realize the corresponding pairs of foliations on each of these subsurfaces. On a punctured disk, realizing such a pair of model foliations crucially uses the work of Au-Wan from \cite{AuWan2}.  The special case when $n=m=2$ is discussed in \S4.4.

\subsection*{Surface of negative Euler characteristic} Now consider the case when $S$ is an oriented surface of genus $g$ and $k$ labelled punctures, such that the Euler characteristic $2-2g-k<0$. Let $\mathfrak{n} = (n_1, n_2, \ldots, n_k)$ be a $k$-tuple of integers, each greater than one. Let $\mathcal{MF}_g(\mathfrak{n})$ be the space of measured foliations with a pole-singularity of order $n_i$ at the $i$-th puncture, and with prescribed asymptotic directions at the poles.
Combining the results in  \cite{GuptaWolf0} and  \cite{GuptaWolf2},  we  parametrize this space in \S2.2 (see Proposition \ref{mfgn-prop}). This work of Gupta-Wolf had also defined these spaces, but had done so relative to fixing a choice of a ``disk neighborhood" of the poles;  the notion of asymptotic data of the foliations at the poles, introduced in this paper, provides a cleaner definition.

By the work in \cite{Bridgeland-Smith}, a \textit{generic} measured foliation in $\mathcal{MF}_g(\mathfrak{n})$ comprises foliated strips and half-planes, and thus has a leaf-space that can be represented as an embedded metric graph on the surface, exactly as in the Three-Pole case.   There are, however, measured foliations with more complicated trajectory structure (e.g. dense leaves) whose corresponding leaf-space is described as a $\pi_1(S)$-invariant $\mathbb{R}$-tree in the universal cover of $S$, with an additional  $\pi_1(S)$-invariant collection of infinite rays corresponding to the higher-order poles (see \S3.3. of \cite{GuptaWolf2}). 

Let  ${Q}_g(\mathfrak{n})$ be the space of meromorphic quadratic differentials on $S$, with a pole of order $n_i$ at the $i$-th puncture.  Our main result is: 

\begin{thm}\label{thm2} 
Let  $S$ be an oriented surface of genus $g$ and $k$ punctures such that $2-2g-k<0$. Let $\mathfrak{n} = (n_1, n_2, \ldots, n_k)$ be a $k$-tuple of positive integers, each greater than one, and fix a set $\mathfrak{a}$ of asymptotic data comprising a tangent direction at each pole. 

 Let  $(\mathcal{H}, \mathcal{V}) \in  \mathcal{MF}_g(\mathfrak{n}) \times \mathcal{MF}_g(\mathfrak{n})$  be a compatible pair of transverse measured foliations, that is, 
\begin{itemize}
\item $\mathcal{H}$ has prescribed asymptotic directions given by $\mathfrak{a}$ at the poles, and $\mathcal{V}$ has the opposite set $\sqrt{-1}\cdot  \mathfrak{a}$ of asymptotic directions (see Definition \ref{opp}), 
\item $\mathcal{H}$ and $\mathcal{V}$ do not simultaneously have transverse measure zero around any puncture, and 
\item if $n_i=2$, and both $\mathcal{H}$ and $\mathcal{V}$ have positive transverse measure around the $i$-th puncture, then the two transverse measures are compatible with the prescribed asymptotic direction at the order two pole (see Definition \ref{compat}). 
\end{itemize} 

Then there exists a unique meromorphic quadratic differential in $Q_g(\mathfrak{n})$ that induces a horizontal foliation equivalent to  $\mathcal{H}$ and vertical foliation equivalent to  $\mathcal{V}$.
\end{thm}

The proof of Theorem \ref{thm2} in \S5.1 uses a decomposition of the desired pair of measured foliations to model foliations around each puncture (as in the proof of Theorem \ref{thm1}) together with a pair  of measured foliations on a surface with boundary. Realizing the latter pair can be reduced to the case of a closed surface by doubling across the boundary; however, the final  assembly of singular-flat surfaces requires the angle at which either foliation intersects the boundary to be prescribed. This is achieved in an intermediate step that involves truncating cylindrical ends that are attached to each boundary component. \\

The space of meromorphic quadratic differentials ${Q}_g(\mathfrak{n})$  forms a vector bundle over the ``appended Teichm\"{u}ller space"  $\widehat{\T}_{g,k}$ of conformal structures on $S$ up to isotopy fixing a framing of the tangent-space at the punctures (see Definition 3.3. of  \cite{GuptaMj1}). Here, the space  $\widehat{\T}_{g,k}$  records,  in addition to the $6g-6+2k$ parameters of the \Te space of $S$, a real ``twist" parameter at each puncture.  Let $\pi: {Q}_g(\mathfrak{n})\to \widehat{\T}_{g,k}$ be the projection map; any fiber $\pi^{-1}(X)$ comprises quadratic differentials  that are meromorphic with respect to the Riemann surface structure on $X$ and induces foliations that have asymptotic directions and integer twist parameters around each pole determined by the corresponding twist parameter on $X$ (\textit{c.f.} \S3.1 of \cite{GuptaMj1}, and see \S5.2 for details.)  

Our final result is a generalization of the Hubbard-Masur theorem to the case of meromorphic quadratic differentials on punctured surfaces. This generalization was first proved in \cite{GuptaWolf0} (for order-two poles) and in \cite{GuptaWolf2} (for higher-order poles) using the theory of harmonic maps, and their work uses the complex-analytic notion of a ``principal part" of a quadratic differential at each pole, with respect to a choice of a coordinate disk. The following alternative generalization  instead uses the space of ``model foliations"  $\mathcal{P}_n$ in the neighborhood of a pole of order $n$ (introduced in \S2.3). 

\begin{thm}\label{thm3} Let $S, \mathfrak{n}$ be as in Theorem \ref{thm2}.
Let ${X} \in \widehat{\T}_{g,k}$, and fix a measured foliation $\mathcal{H} \in  \mathcal{MF}_g(\mathfrak{n})$, and model foliations $F_i \in \mathcal{P}_{n_i}$ for each $1\leq i\leq k$. 
Suppose the asymptotic directions  $\mathfrak{a}$ and real twist parameters of $\mathcal{H}$ at the poles are those determined by the twist parameters of $X$, and $\mathcal{H}$ restricts to the model foliations $F_i^H \in \mathcal{P}_{n_i}$ in a disk $D_i \cong \mathbb{D}^\ast$ around the $i$-th pole,  where each pair $(F_i^H, F_i)$ is compatible, exactly as in Theorem \ref{thm2}. 
 Then there is a unique meromorphic quadratic differential $q\in Q_g(\mathfrak{n})$ satisfying $\pi(q)=X$, such that  the horizontal foliation of $q$ is equivalent to $\mathcal{H}$, and the vertical foliation of $q$ restricts to the model foliation $F_i$ at the $i$-th pole.
\end{thm}

\smallskip

A  key step in the proofs of Theorems \ref{thm1} and \ref{thm2} was the fact that a pair of model foliations in $\mathcal{P}_n$ uniquely determines a singular-flat metric on a neighborhood of that pole (see Proposition \ref{prop2}). Thus in Theorem \ref{thm3}, since $\mathcal{H}$ determines the horizontal model foliation at each pole,  prescribing the vertical model foliations is equivalent to prescribing the geometry of the singular-flat end corresponding to each pole. The strategy of the proof of Theorem \ref{thm3} in \S5.2 is to reduce to the case when all poles have order two (and all ends are cylindrical) and use the main result of \cite{GuptaWolf0}.

\bigskip

\textbf{Acknowledgements.} This article has been in the works for several years, SG and MT are grateful for the support by NSF grants DMS-1107452, 1107263, 1107367 ``RNMS: GEometric structures And Representation varieties" (the GEAR Network). SG is also grateful for the support by the Danish National Research Foundation centre of Excellence, Centre for Quantum Geometry of Moduli Spaces (QGM), the Department of Science and Technology (DST) MATRICS Grant no. MT/2017/000706, the Infosys Foundation, and the UGC. The authors are grateful to Fred Gardiner, as well as an anonymous referee, whose comments improved a previous version of the paper.

\section{Preliminaries} 

\subsection{Quadratic differentials and their induced geometry}

A holomorphic quadratic differential $q$ on a Riemann surface $X$ is a holomorphic section of the  symmetric square of canonical bundle $K_X^2$. Locally, such a holomorphic quadratic differential can be expressed as $q(z)dz^2$ where $q(z)$ is a holomorphic function. A holomorphic quadratic differential induces a singular-flat metric and horizontal and vertical foliations on the underlying Riemann surface that we now describe. For an account of what follows, see \cite{Streb} or \cite{Gard}. A key new notion introduced in this paper is that of an ``asymptotic direction" at a pole -- see Definition \ref{adata}.

  \begin{defn}[Singular-flat metric] A holomorphic quadratic differential induces a conformal metric locally of the form $\lvert q(z)\rvert \lvert dz\rvert^2$, which is a flat Euclidean metric with cone-type singularities at the zeroes, where a zero of order $n$ has a cone-angle of $(n+2)\pi$.
\end{defn}

\begin{defn}[Horizontal and vertical foliations]\label{fols}  A holomorphic quadratic differential on $X=\mathbb{C} \text{ or } \mathbb{C}^\ast$ determines  a  bilinear form  $q: T_xX \otimes T_x X \to \mathbb{C}$ at any point $x\in X$ away from the poles.  Away from the zeroes, there is a unique (un-oriented) \textit{horizontal direction} $v$ where $q(v,v)\in \mathbb{R}^{+}$. Integral curves of this line field on $X$ determine the \textit{horizontal foliation} on $X$. Similarly, away from the zeroes, there is a unique (un-oriented) \textit{vertical direction} $h$ where $q(h,h)\in i\mathbb{R}^{+}$. Integral curves of this line field on $X$ determine the \textit{vertical foliation} on $\mathbb{C}$.
\end{defn}

\noindent \textit{Remarks.} 1. The terminology arises from the fact that for the quadratic differential $dz^2$ on any subset of $\mathbb{C}$ (equipped with the coordinate $z$), the horizontal and vertical foliations are exactly the foliations by horizontal and vertical lines.\\
2. Conversely, if we start with (possibly non-compact) domains on $\C$ whose boundaries comprise straight line intervals  and  identify pairs of such geodesic by half-translations to obtain an oriented surface, then the resulting surface acquires a Riemann surface structure as well as a holomorphic quadratic differential. The latter descends from the standard differential $dz^2$ on the domains, since $dz^2$ is invariant under half-translations. The condition that the boundary edges are identified by half-translations is equivalent to the requirement that the identification is by a (Euclidean) isometry and the horizontal foliation of the standard differential $dz^2$ intersects any pair of boundary edges being identified at the same angle.

\begin{defn}[Prong-singularities and natural coordinates]\label{pp} At the zero of order $k\geq 1$ of a quadratic differential $q$, the horizontal (and vertical) foliation has a  \textit{$(k+2)$-prong singularity}. That is, in a neighborhood of the zero, the horizontal  foliation is the pullback of the horizontal foliation on $\mathbb{C}$ by the map $z\mapsto \xi = z^{k/2 +1}$ (which is a branched cover, branched at the zero of the target $\xi$-plane). Here, $\xi$ is called the \textit{natural coordinate} for the quadratic differential, since $q= d\xi^2$ (up to a constant multiplicative factor).
\end{defn}

  \begin{figure}
    \centering
 \includegraphics[scale=0.37]{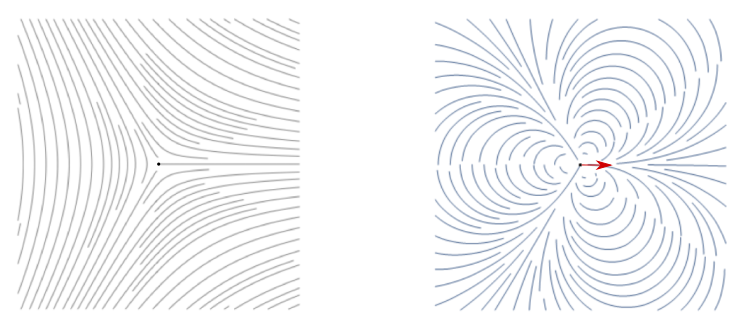}\\
  \caption{The horizontal foliation for $zdz^2$ has a $3$-prong singularity at the origin (left), and a pole-singularity of order $5$ at infinity (right). The red arrow shows a choice of an asymptotic direction at the pole. }
  \label{singfig}
\end{figure}

\begin{defn}[Pole-singularities of higher order]\label{singh} 
At a pole of order $n\geq 2$, the foliation induced by $q$ has a \textit{pole-singularity of order $n$}.  For $n>2$, the  induced singular-flat geometry comprises $(n-2)$ foliated Euclidean half-planes surrounding the pole in cyclic order; the horizontal leaves are asymptotic to $(n-2)$ directions at the pole, and the same for vertical leaves. See Figure 1, and \S6 of \cite{Streb} for details. Indeed, if the leading order term for $q$ is $ \frac{a^2}{z^n}$ in some local coordinate $z$ around the pole, for some $a \in \C^\ast$ with $\textit{Arg}(a) = \theta$, then the horizontal leaves are asymptotic to the directions at angles $\theta + j \cdot \frac{2\pi}{n-2}$ where $0\leq j <n-2$ and the vertical leaves are asymptotic to the directions  $\theta + (j + \frac{1}{2}) \cdot \frac{2\pi}{n-2}$. \end{defn} 

\begin{defn}[Pole-singularity of order $2$]\label{sing2} Around a pole of order two, the induced foliation looks either like a foliation by concentric circles, or leaves spiralling to the pole. That is, one can choose a local coordinate disk $U \cong \mathbb{D}^\ast$ around the pole such that $q = -\frac{a^2}{z^2} dz^2$ for some $\pm a\in \C^\ast$, called the \textit{residue} at the pole, which is in fact coordinate-indpendent. The case of concentric circles then arises for the horizontal foliation when $a^2 \in \mathbb{R}^+$, and for the vertical foliation when $a^2 \in \mathbb{R}^-$.   In either case, in the singular-flat metric induced by $q$, a neighborhood of the pole is isometric to a semi-infinite Euclidean cylinder. (See Chapter III \S7.2 of \cite{Streb}, and \S2.2 of \cite{GuptaWolf0}.)  \\
We also note that in the universal cover $p:\mathbb{H} \to \mathbb{D}^\ast$ given by $w\mapsto z=e^{2\pi i w}$, $q$ pulls back to the quadratic differential $\frac{a^2}{4\pi^2} dz^2$, and the induced foliation on $\mathbb{H}$ is by straight lines at an angle  $\theta = - \text{Arg}(a)$. (See Figure 2.)  This will be the definition of the asymptotic direction in this case (see Definition \ref{adata}). 
\end{defn}

 \begin{figure}
    \centering
 \includegraphics[scale=0.45]{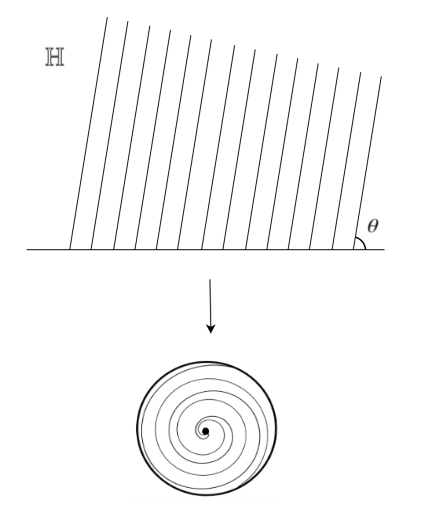}\\
  \caption{The angle $\theta$ of the leaves in $\mathbb{H}$ determines the asymptotic direction of the foliation at a pole-singularity of order two in $\D^\ast \cong \HH/\mathbb{Z}$. }
  \label{singfig}
\end{figure}

\noindent \textit{Remark.}  In a neighborhood of a pole of order $1$, also called a \textit{simple} pole,  this foliation looks like a ``fold", since it is the pullback of the horizontal foliation by the map $z\mapsto \xi = \sqrt z$. As alluded to in the Introduction, this implies that the pole-singularity becomes a regular point on the double cover branched at the simple pole.

\begin{defn}[Transverse measure]\label{tm} The horizontal (resp. vertical) foliation induced by a holomorphic quadratic differential is equipped with a transverse measure, that is, any  arc transverse to the foliation acquires a measure that is invariant under transverse homotopy of the arc. Namely, the  \textit{transverse measure} of such an arc $\gamma$  transverse to the horizontal foliation is
$$\tau_h(\gamma) =  \lvert \displaystyle\int\limits_\gamma\Im (\sqrt q) (z)  dz  \lvert $$
assuming $\gamma$ is contained in a coordinate chart, and similarly the transverse measure $\tau_v(\gamma)$ of an arc $\gamma$ transverse to the \textit{vertical} foliation  is given by the modulus of the integral of the real part $ \Re (\sqrt q) (z) $.
In general one adds such distances along a cover of the arc comprising of coordinate charts; this is well-defined as the above integrals are preserved (up to sign) under change of coordinates.
Given a simple closed curve $\gamma$ that is homotopically non-trivial, we define the transverse measure of the homotopy class $[\gamma]$ to be the infimum of the transverse measures of curves homotopic to $\gamma$.
\end{defn}

These foliations equipped with a transverse measure induced by a holomorphic quadratic differential are examples of a \textit{measured foliation} on a smooth surface, that is defined purely as a topological object as follows:

\begin{defn}[Measured foliations]\label{mfdef}  A \textit{measured foliation} on a (possibly punctured) smooth surface $S$ is a $1$-dimensional foliation that is smooth except finitely many prong-singularities (see Definition \ref{pp}), equipped with a transverse measure.  We shall define two such measured foliations to be \textit{equivalent} if they differ by an isotopy and Whitehead-moves.  If the surface has punctures, then the isotopy is relative to the punctures, in the sense that  a choice of framing  of the tangent space at the pole (given by a tangent direction $v$  and an orthogonal  vector $\sqrt{-1}\cdot  v$) is kept fixed by the isotopy. Equivalently, if we  consider a real oriented blow-up of each puncture to a boundary circle to obtain a surface-with-boundary, then the isotopy is required to fix each boundary component pointwise. 
\end{defn}

\noindent \textit{Remarks.} 1.  For a \textit{closed} surface, it can be shown that two measured foliations are equivalent if and only if the respective transverse measures of homotopy classes of all simple closed curves are equal. See \cite{FLP} for a comprehensive account of this. \\
2. As mentioned in the Introduction, for a closed Riemann surface $X$, \cite{HubbMas} showed that any such equivalence class of a measured foliation (on the underlying smooth surface) is in fact the horizontal (or vertical) foliation of a unique holomorphic quadratic differential. \\

The following fact about the global trajectory-structure is well-known (see \cite{Jenk} or \cite{Streb}):

\begin{prop}\label{fstruc} Let $F$ be a  measured foliation on a compact surface $S$ with finitely many pole-singularities of order greater than one at the punctures. Then the surface can be decomposed into finitely many regions, such that the restriction of $F$ to any region yields one of the following:
 \begin{enumerate}
 \item a foliated half-plane, foliated by leaves parallel to the boundary.
 \item a foliated strip, foliated by leaves parallel to the two boundary components. or
  \item  a foliated annulus, with leaves that are closed curves parallel to the two boundary components. (We shall continue to call this a ``ring-domain".)
 \item  a \textit{spiral domain} in which each leaf is dense.
 \end{enumerate}
\end{prop}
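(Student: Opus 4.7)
The plan is to excise standard local models at the poles and then analyze the measured foliation on the compact complement. At each pole $p$ of order $k\geq 3$ (including the pole at $\infty$ in the $\mathbb{C}$ case), Definition \ref{pp} provides a neighborhood $N_p$ in which $F$ is the pullback under $z\mapsto z^{-k/2+1}$ of the horizontal foliation on $\mathbb{C}$, displaying $k-2$ cyclically arranged foliated half-plane sectors meeting at $p$. Removing the open neighborhoods $N_p$ from the ambient domain produces a compact subsurface $\Sigma$, homeomorphic to a closed disc in the $\mathbb{C}$ case and to a closed annulus in the $\mathbb{C}^\ast$ case. On $\Sigma$ the foliation $F$ has only finitely many prong singularities in its interior, and $\partial\Sigma$ is transverse to $F$ away from the finitely many points where the half-plane separatrices enter.

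I would next introduce the \emph{critical graph} $\Gamma\subset\Sigma$, defined as the union of the closures of all separatrices --- leaves emerging either from an interior prong singularity or from a half-plane boundary entering through $\partial\Sigma$. The central claim is that $\Gamma$ is a finite graph: since the prong singularities and pole sectors are finite in number, it suffices to show that every separatrix terminates, either at another prong singularity or by exiting back through $\partial\Sigma$. The main obstacle is ruling out recurrent separatrices that accumulate onto themselves inside $\Sigma$. I would handle this with a Poincar\'e--Bendixson-type argument tailored to measured foliations on surfaces of small topological complexity: the accumulation set of any recurrent leaf must contain a minimal sublamination of $F\vert_\Sigma$, but on a disc or annulus the only minimal measured laminations are weighted closed leaves isotopic to the core, and a separatrix spiralling onto such a closed leaf produces infinitely many intersections with any short transversal crossing the accumulation locus, contradicting the finiteness of the transverse measure of the transversal together with the presence of a prong at one end of the separatrix.

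Once $\Gamma$ is a finite graph, the complement $\Sigma\setminus\Gamma$ breaks into finitely many open regions, each an open disc or open annulus carrying a non-singular measured foliation whose boundary consists of arcs of $\Gamma$ together with transverse arcs of $\partial\Sigma$. The same low-complexity analysis identifies the foliation on each such region as topologically standard --- either a band of arcs running between two transverse boundary arcs, or, only in the annular case, a band of closed curves parallel to the core; closed leaves are excluded from disc components by the index computation, since the sum of prong indices inside a disc bounded by a closed leaf would have to equal $+1$, which is incompatible with each $k$-prong contributing index $1-k/2 \leq -1/2$. Reattaching the pole neighborhoods with their half-plane sectors then glues these building blocks into the desired global decomposition: each pole sector extends outward across $\Sigma$ along an arc-band to produce a foliated half-plane (type (1)) or, if it joins another pole sector, a foliated strip (type (2)), while any annular component with closed leaves yields a foliated ring-domain (type (3)), which therefore arises only in the $\mathbb{C}^\ast$ setting.
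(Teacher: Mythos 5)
Your overall strategy is essentially the one the paper uses: build the critical graph, use Poincar\'e--Hopf index counts to exclude contractible closed leaves and to limit ring domains, invoke non-recurrence to get finiteness, and then identify the complementary regions. The genuine differences are presentational (you excise compact pole neighborhoods and work on a disc or annulus, while the paper works directly on $\mathbb{C}$ or $\mathbb{C}^\ast$) and one substantive one: you attempt to \emph{prove} non-recurrence, whereas the paper simply cites Jenkins for the fact that no leaf of $F$ is recurrent. Two steps in your version need tightening. First, in the spiralling argument, ``infinitely many intersections with a short transversal'' does not by itself contradict finiteness of the transverse measure: a single leaf is a null set for that measure. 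The correct contradiction comes from holonomy invariance --- the consecutive return points of the separatrix cut the transversal into infinitely many sub-intervals, each carried to the next by the holonomy along the leaf and hence all of the same positive transverse measure (positivity using full support of the measure), so the transversal would have infinite measure. The prong at the end of the separatrix plays no role here and can be dropped. Second, your reassembly step is under-justified: to conclude that each arc-band, after the pole sectors are reattached, is a genuine half-plane or strip, you must rule out a band whose leaves have both ends asymptotic to the \emph{same} direction at the \emph{same} pole. This is exactly the content of the paper's observation~(b) and its Claim~4 (no prong-singularity inside a strip bounded by two leaves with identical asymptotics), which follows from the local model of Definition~\ref{pp} and Lemma~\ref{lem:gen}; without it the identification of each complementary region with one of the three models is incomplete. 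With those two repairs your argument goes through and is, if anything, more self-contained than the paper's, at the cost of importing the classification of minimal sublaminations on low-complexity surfaces.
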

\medskip

At the pole-singularities of a measured foliation, we introduce  a circle-valued parameter:

\begin{defn}[Asympotic direction]\label{adata}  The \textit{asymptotic direction} of  measured foliation $F$ at a pole-singularity of order $n>2$ is the asymptotic direction $\theta$ of a leaf at the pole, where $\theta$ can be thought of as a point on the unit tangent circle at the pole. 
At a pole-singularity of order $n=2$, the asymptotic direction of $F$ is defined to be the angle $\theta \in [0,\pi)$ of the linear foliation on the  universal cover $\mathbb{H}$  that descends to a foliation equivalent to $F$  in a punctured-disk neighborhood of the pole (\textit{c.f.} Definition \ref{sing2}). Note that in the case that $\theta = 0$ the foliation $F$ comprises closed leaves (concentric circles) around the pole; in this case the transverse measure around the pole is necessarily zero. 
\end{defn}

\subsection{Compatible pairs} 
The horizontal and vertical foliations induced by a meromorphic quadratic differential are, by construction, transverse to each other away from the prong and pole singularities. In this section we list some such ``compatibility" criteria that are necessary for a pair of measured foliations to be equivalent to the horizontal and vertical foliations of some meromorphic quadratic differential. 

First, the ``transversality"  of the two foliations implies the following:

\begin{lem}
	\label{transverse0}
	Let $\mathcal{H}$ and $\mathcal{V}$ be the horizontal and vertical foliations, respectively, of a meromorphic quadratic differential on some surface $S$. For any simple closed curve $\gamma$ on $S$ that is homotopically non-trivial, let  $\tau_h,\tau_v$ be the transverse measures of the homotopy class of $\gamma$, for $\mathcal{H}$ and $\mathcal{V}$ respectively. Then $\tau_h, \tau_v$ cannot both be zero.
\end{lem}
\begin{proof}

It suffices to show that if the vertical foliation has a ring domain with core curve $\gamma$, then the horizontal foliation cannot have a ring domain with the same core curve.

Suppose both the horizontal and vertical foliations have ring domains, with core curves homotopic to $\gamma$.
Let $\gamma_v$ be a leaf in the vertical ring domain, and $\gamma_h$ be a leaf in the horizontal ring domain.
There are two cases:

Case 1:  The leaves $\gamma_v$ and $\gamma_h$ are disjoint:

Since they are homotopic to each other, they bound an annulus $A$ between them. Consider the restriction $F$ of, say, the horizontal foliation on $A$.  One boundary component of $A$ is a leaf of $F$, and the other boundary (which is a vertical leaf) is transverse to $F$.  This implies that $F$ must have singularities in $A$; however any prong-singularity has negative index, and since the Euler characteristic of $A$ is zero,  we again have a contradiction to the Poincar\'{e}-Hopf theorem. 

Case 2:  The leaves $\gamma_v$ and $\gamma_h$ intersect:

By an ``innermost disk" argument we can choose two sub-arcs of $\gamma_v$ and $\gamma_h$ respectively,  that bound a topological disk $D$.  The horizontal foliation is transverse to the part of the boundary $\partial D$ that is vertical; we can assume, after an isotopy, that the leaves intersect the boundary orthogonally. Then doubling across it, we obtain a foliated disk such that the boundary is a leaf, and the foliation has only prong-type singularities. This  contradicts the Poincar\'{e}-Hopf theorem, exactly as in the proof of the Claim above. 

This contradicts our assumption that both the horizontal and vertical foliations have ring domains with core curve $\gamma$; hence, one the transverse measures $\tau_h, \tau_v$ of the homotopy class of $\gamma$ is positive. \end{proof}

We shall apply the above lemma, in particular, for asserting  the transverse measures of the horizontal and vertical foliations around any pole-singularity (considered as a puncture on the surface) cannot both be zero. 
At these pole-singularities, the horizontal and vertical foliations satisfy some additional compatibility conditions, which we now define:

\begin{defn}[Opposite parameters]\label{opp} 
Given an asymptotic direction $a \in S^1$ at a pole of order $n>2$ (See Definition \ref{adata}), a direction $a^\prime \in S^1$ is \textit{opposite} if it differs from $a$ by an odd multiple of $\pi/(n-2)$. 
 For an asymptotic direction $\theta$ at a pole of order two, the opposite is the asymptotic direction $\theta + \pi/2$ (modulo $\pi$). 

Note that the horizontal and vertical foliations induced by a meromorphic quadratic differential  $q$ have opposite asymptotic directions at each pole (see Definition \ref{singh}). 

\end{defn} 

\begin{defn}[Compatible transverse measures]\label{compat} Let $F,G$ be two measured foliations with a pole-singularity of order two and asymptotic direction $\theta \in (0,\pi)$, such that the transverse measures $\tau_F$ and $\tau_G$ around the pole are positive. Then these transverse measures are said to be \textit{compatible with the asymptotic direction} $\theta$, if  the ratio $\tau_F/\tau_G = \lvert \tan{\theta} \rvert$.
\end{defn}

\noindent \textit{Remark.} As in the previous definition, the motivation for this definition is that this compatibility of transverse measures  is necessary if $F,G$ are the horizontal and vertical foliations induced by $q = -\frac{a^2}{z^2} dz^2$. Indeed, from our definitions, in that case $\theta = -\text{Arg}(a)$ (modulo $\pi$), and $\tau_F = \lvert a \rvert \lvert \cos \theta \rvert$ and $\tau_G = \lvert a \rvert   \sin \theta $. \\

Finally, we shall say:

\begin{defn}\label{compat2}  Two measured foliations $\mathcal{H}$ and $\mathcal{V}$ on a punctured surface $S$ with pole-singularities at the punctures of identical orders, are said to be \textit{compatible} if 
\begin{itemize}
\item[(i)] they are transverse to each other away from the prong and pole-singularities,
\item[(ii)] the transverse measures around any pole-singularity are not both zero, 
\item[(iii)] the asymptotic directions at each pole-singularity are opposite, and 
\item[(iv)] if both have positive transverse measures around a pole-singularity of order two, then the two transverse measures are compatible with the asymptotic direction, as in Definition \ref{compat}. 
\end{itemize} 
\end{defn}

\subsection{Space of measured foliations} 
We can  define the following space of measured foliations (already introduced in the Introduction):

\begin{defn}\label{mfgn-def} For an integer $k\geq 1$, and an integer $k$-tuple $\mathfrak{n}= (n_1,n_2,\ldots, n_k)$ such that each $n_i\geq 2$, we define  $\mathcal{MF}_g(\mathfrak{n})$ to be the space of (equivalence classes of) measured foliations on an oriented surface $S$ of genus $g$ and $k$ labelled points, such that the $i$-th point is a pole-singularity of order $n_i$, and the asymptotic direction (see Definition \ref{adata}) at each point is prescribed. 
\end{defn}

In this section shall describe the parametrization of $\mathcal{MF}_g(\mathfrak{n})$, following the discussions in \cite{GuptaWolf0} and   \cite{GuptaWolf2}.  The topology on  $\mathcal{MF}_g(\mathfrak{n})$ will also be described in the proof of Proposition \ref{mfgn-prop}. We shall start with:

\begin{defn}[Model foliations on $\mathbb{D}^\ast$]\label{model}  
For any pole-singularity of order $n>2$, there is an (open) punctured disk neighborhood $U \cong \mathbb{D}^\ast $ that is (a) a `sink-neighborhood", that is, any leaf entering $U$ continues to the pole, after possibly passing through prong-singularites, and (b) satisfies the property that no leaf exits $U$ and then enters $U$ again.  (See the discussion in Definition 12 of \cite{GuptaWolf2}.)  The measured foliation $F\vert_U$ is then a ``model foliation" for that order $n$ of a pole-singularity. Note that  $F\vert_U$ comprises the foliated half-planes and (possibly) foliated strips (either infinite, from the puncture to itself, or semi-infinite, from the puncture to $\partial U$). As before, we consider such foliations up to an equivalence: two model foliations on $U$ are equivalent if they differ by Whitehead moves, or an isotopy that fixes a framing of the tangent-space at the puncture (but is allowed to move points on the boundary $\partial U$).\end{defn} 

Recall that the leaf-space $G$  of a measured foliation $F$ on a surface is defined as $$G := X/\sim$$ where $x\sim y$ if $x,y$ lie on the same leaf of $F$, or on leaves that are incident on a common prong-singularity.  The leaf-space of $F\vert_U$ is then a metric graph $G$ with finitely many vertices and edges, where the finite-length edges of $G$ are the leaf-spaces of the strips, and the $(n-2)$ infinite-length rays  are the leaf-spaces of the half-planes. See Figure 3. Conversely, given such a metric graph $G$, it is easy to construct a model foliation on a punctured disk with leaf-space $G$; this is uniquely defined once the asymptotic direction at the puncture is fixed.

 \begin{figure}
    \centering
 \includegraphics[scale=0.33]{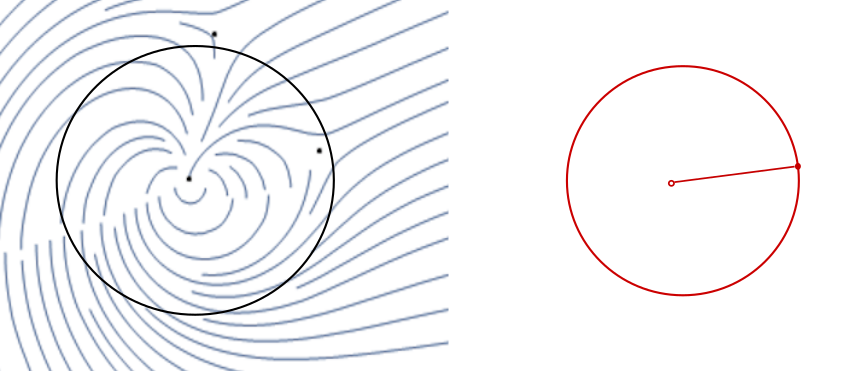}\\
  \caption{A model foliation in a neighborhood of a pole of order 3 (inside the circle shown on the left) has a leaf-space that is a metric graph (show on the right) with one finite length cycle and one infinite ray. }
  \label{singfig}
\end{figure}

For $n>2$, let ${\mathcal{P}}_n$ be the space of  model foliations on a punctured disk with a pole-singularity of order $n$ at the puncture, with a prescribed asymptotic direction at the puncture.  We can equip this space with the topology on the space of their leaf-spaces: two metric graphs are close if they combinatorially equivalent up to Whitehead moves on short edges, and the lengths of the finite-length edges are close. Following Proposition 17 of \cite{GuptaWolf2} we have:

\begin{prop}\label{prop-pn}  Let $n>2$. The space ${\mathcal{P}}_n$ is homeomorphic to $\mathbb{R}^{n-3} \times \mathbb{R}_{\geq  0}$  where the first factor is parametrized by the edge-lengths of the leaf-spaces, the second factor records the  transverse measure of $\partial U$.
\end{prop}

\subsubsection*{Order two pole} For a pole of order $n=2$, there is a punctured disk neighborhood  $U$ of the pole such that the leaf-space of $F\vert_U$ is either an infinite ray (in the case that the transverse measure is zero), or a circle of circumference equal to the transverse measure around $\partial U$. The foliation on $U$ is rotationally symmetric; however we can define the asymptotic direction of the leaves on the universal cover (see Definition \ref{sing2}). Once again, there is a space $\mathcal{P}_2$ of such model foliations on a punctured disk, where we note:

\begin{prop}\label{prop-p2}  A model foliation on $\mathbb{D}^\ast$ with a pole singularity of order $2$ at the puncture is uniquely determined by the transverse measure, and the asymptotic direction at the pole.  \end{prop} 
\begin{proof}
Recall from Definition \ref{adata} that the asymptotic direction at the pole determines the angle $\theta \in [0,\pi)$  of the leaves of the straight-line foliation on the universal cover $\mathbb{H}$. Thus the lift of the model foliation to the universal cover is specified completely by the asymptotic direction.  If the asymptotic direction is $0$, then the transverse measure is necessarily equal to zero, and the lifted foliation is by horizontal lines, which is the horizontal foliation of the quadratic differential $\tilde{q} = dz^2$ on $\HH$.  
Otherwise, the lifted foliation is the horizontal foliation of a constant quadratic differential $\tilde{q} = a^2dz^2$ on $\HH$, where $a\in \C^\ast$ satisfies $\text{Arg}(a) = -\theta$. If the transverse measure is prescribed to be $\tau>0$, then from our definitions $\tau = \lvert a \rvert \lvert \cos \theta \rvert$, and hence $a$ is uniquely determined. 
Thus, given $\theta$ and the transverse measure, the measured foliation in the quotient $\mathbb{D}^\ast = \HH/\langle z \mapsto z+1\rangle$ is uniquely determined. 
\end{proof}

\subsubsection*{Foliations on a surface with boundary}

In  \S3.1 of \cite{GuptaWolf2}  and  \S3.4 of \cite{ALPS}, the space of measured foliations  $\mathcal{MF}_{g,k}$ on a compact oriented surface of genus $g$ and $k\geq 1$ boundary components, and negative Euler characteristic, was parametrized.  In their work the foliations were considered up to isotopy that allowed points on the boundary to move, that is, there was no ``twist" parameter associated with the boundary components.   They proved (see, Proposition 3.9 of \cite{ALPS} or Proposition 11 of \cite{GuptaWolf2}) that:

\begin{prop}\label{mfb} The space of  measured foliations  $\mathcal{MF}_{g,k}$ is homeomorphic to $\mathbb{R}^{6g-6 + 3k}$.
\end{prop}

 Here, the topology on  $\mathcal{MF}_{g,k}$ is such that two measured foliations are close if the transverse measures of (homotopy classes) of a  filling set of arcs or simple closed curves on $S$ are close. 
Note that in their parametrization, the transverse measure around a boundary component determines a \textit{real}-valued parameter $\tau$ (and not a non-negative real parameter); $\tau<0$ is interpreted as the foliation having a ring domain adjacent to the boundary (i.e.\ a cylinder foliated by closed leaves parallel to boundary) with transverse measure $\lvert \tau\rvert$. 

\subsubsection*{Parametrizing $\mathcal{MF}_g(\mathfrak{n})$}

The measured foliations with pole-singularities in  $\mathcal{MF}_g(\mathfrak{n})$ have an additional real-valued twist parameter associated with each pole, since we consider foliations up to an isotopy that fixes a framing of the tangent space at each such point. Such a framing is determined by the asymptotic data at the pole and its opposite (see Definitions \ref{adata} and \ref{opp}).

Combining the cases of foliations on a surface-with-boundary, and on a punctured disk, as discussed above, we have:

\begin{prop}\label{mfgn-prop} Let $S$ be an oriented surface of negative Euler-characteristic, having genus $g$ and $k\geq 1$ punctures. Let $\mathfrak{n}$ be a $k$-tuple of integers greater than one, as in Definition \ref{mfgn-def}. Then the space of measured foliations  $\mathcal{MF}_g(\mathfrak{n})$  is homeomorphic to $\mathbb{R}^\chi$ where $\chi = 6g-6 + \sum\limits_{i=1}^k (n_i +1)$.
\end{prop}

\begin{proof}
Let $F$ be a measured foliation in $\mathcal{MF}_g(\mathfrak{n})$. 
Deleting the neighborhoods $U_1,U_2,\ldots, U_k$ where $F$ restricts to a model foliation, we obtain a measured foliation $F_0$ on the surface-with-boundary $S^\prime = S \setminus (U_1 \cup U_2 \cup \cdots \cup U_k)$ that by Proposition \ref{mfb} is parametrized by $6g-6+3k$ parameters. 

On a punctured disk $U_i$ around the $i$-th puncture with a pole-singularity of order $n_i>2$, the model foliation $F\vert_{U_i}$ is specified by $n_i-3$ additional parameters by Proposition \ref{prop-pn} (since the transverse measure parameter of $\partial U_i$ has to coincide with that of $F_0$).  We have an additional  real twist parameter around each puncture, which measures the gluing of  $U_i$ with the corresponding boundary component of  $S^\prime$.  This is relevant only when the transverse measure around $\partial U_i$ is positive, since otherwise $\partial U_i$ is a closed leaf of the foliation, and foliations differing by a twist around it are in fact isotopic. 

The data of the twist parameter $\sigma_i$ can be thought of as measuring an additional twist associated with the boundary component $\partial U_i$ on $S^\prime$. As usual for Fenchel-Nielsen parameters, these twist parameters can be measured relative to a collection of reference arcs, each non-trivial in homotopy, between the boundary components of $S^\prime$.  
 Moreover, each twist parameter can be combined with the transverse measure  of that boundary component: namely, following \cite{ALPS}, the two  real parameters of the (non-nonegative) transverse measure $\tau$ around the boundary component  and twist parameter $\sigma$  constitute the parameter space 
 \begin{equation}\label{pr2} 
 \mathbb{R}^{[2]} = \mathbb{R}_{\geq 0} \times \mathbb{R}/\sim\text{ where } (0, \sigma) \sim (0, - \sigma)
 \end{equation}
 that is homeomorphic to $\mathbb{R}^2$. 

Here, when the transverse measure $\tau=0$, the absolute value of the $\sigma$ coordinate equals the transverse measure across the corresponding ring domain adjacent to the boundary. In particular, if the twist parameter  $\sigma$  is kept fixed, and the transverse measure $\tau \to 0$, then the foliations converge to a ring domain of length $ \lvert \sigma \rvert$. This describes the phenomenon that foliations converge to one with a ring domain, as we twist more and more, and decrease the transverse measure at the appropriate rate so that the foliations converge.  Note that one can converge to such a foliation both by positive or negative twists;  this results in the identification of the positive and negative rays as described above.

On a punctured disk $U_i$ with pole order $n_i=2$, the model foliation $F\vert_{U_i}$ is uniquely determined from the prescribed asymptotic direction and the transverse measure of $\partial U_i$  (see Proposition \ref{prop-p2}). Hence $F_0$ admits a unique extension to $U_i$, and for such a punctured disk, there are no other parameters. Note that in the case that the asymptotic direction at the $i$-th puncture forces the transverse measure of $\partial U_i$ to be zero, then the (possibly degenerate) ring domain $R$ of $F_0$ adjacent to $\partial U_i$ extends to a ring domain on $U_i$, and we ignore the transverse measure across $R$.

Adding the parameters thus obtained, we have $\chi= 6g-6 + \sum\limits_{i=1}^k (n_i +1)$ real parameters that specify $F$ uniquely.

 Conversely, any such set of $\chi$ real parameters can be realized: the parameters determine a unique measured foliation $F_0$ on $S^\prime$ by Proposition \ref{mfb}, and on each $U_i$  by Propositions \ref{prop-pn} (if $n_i>2$) and \ref{prop-p2} (if $n_i=2$). Thus, it only remains to glue the foliated disk $U_i$ by identifying the boundary $\partial U_i$ with the $i$-th boundary component of $S^\prime$; here the twist parameter $\sigma_i$ of $F_0$ associated with that boundary component plays a role.  In the case that the transverse measure of the $i$-th boundary component $\tau_i >0$, this identification of the two circles is with a twist that, in the universal cover, corresponds identifying the boundary lines after a translation by a (signed) distance $\sigma_i$, where the distance is on the line is the induced transverse measure. 
In the case the transverse measure of the $i$-th boundary component $\tau_i = 0$, the twist parameter $\sigma_i$ denotes the transverse measure across a ring domain $A_i$; here such a ring domain $A_i$ is inserted in between the boundary component of $S^\prime$ and the foliated disk $U_i$.

The topology on  $\mathcal{MF}_g(\mathfrak{n})$ is defined to make this bijection a homeomorphism: namely, a pair of foliations (with the same set of asymptotic directions at the poles) are close if 
\begin{itemize}
\item[(i)] their restriction on neighborhoods of the poles define a  pair of model foliations  that are  close, in the corresponding space $\mathcal{P}_n$, 
\item[(ii)] their restriction to the complement of these neighborhoods determines is  pair of foliations that is close in $\mathcal{MF}_{g,k}$, and 
\item[(iii)] the twist parameters that determines the gluing of each neighborhood in (i) with the complementary subsurface in (ii) are close.
\end{itemize}  \end{proof}

\noindent \textit{Remark.} Proposition \ref{mfgn-prop} assumes that the underlying surface $S$ has negative Euler characteristic;  the spaces of foliations on the complex plane $\C$ and the punctured plane $\mathbb{C}^\ast$ (relevant for Theorem \ref{thm1}) will be described in \S3 and \S4.1 respectively.

\section{The work of Au-Wan}

In this section we recall the work of Au-Wan in \cite{AuWan2} that solved the problem of prescribing horizontal and vertical foliations of a meromorphic quadratic differential on $\cp$ with exactly one pole, necessarily of order $n\geq 4$. As mentioned in \S1, the space of such quadratic differentials is
\begin{center}
$ {Q}_0(n) = \{(z^{n-4} + a_1z^{n-6} + \cdots + a_{n-2} z + a_{n-5})dz^2 \mid  a_i \in \mathbb{C} \text{ for } i=1,\ldots, n-1\}$
 \end{center}
and is thus homeomorphic to $\C^{n-5}$. Note that we have normalized our polynomial to be, in particular, monic; this fixes the asymptotic data (see Definition \ref{adata}) at the pole at $\infty$.

\subsection*{Measured foliations on $\C$} On the other hand, a measured foliation on $\cp$ with a single pole-singularity of order $n> 4$ at $\infty$ has $(n-2)$ foliated half-planes around $\infty$ and (possibly) foliated infinite strips.  In what follows we shall assume that the positive real direction is an asymptotic direction at the pole at infinity.  The leaf-space of such a foliation is thus a planar metric tree; the $(n-2)$ infinite-length edges corresponding to the half-planes are labelled by $\{1,2,\ldots, n-2\}$ in anti-clockwise order, where the ray corresponding to the positive real direction is labelled $1$. Note that this metric tree can be \textit{embedded} in $\C$, transverse to the foliation, such that each infinite ray eventually  lies in the foliated half-plane it represents.

Following  the work of Mulase-Penkava in \cite{MulPenk}, any such metric tree is obtained by a \textit{metric expansion} of a $(n-2)$-pronged star $G_{n-2}$ (where the prongs are infinite-length rays that are labelled) that replaces the central vertex of $G_{n-2}$ by a tree (with each new vertex of degree greater than two) that connects with the rest of the graph.
They proved:

\begin{thm}[Theorem 3.3 of \cite{MulPenk}]\label{mex} The space of metric trees  $\mathsf{T}(n-2)$ with $(n-2)$ infinite rays, labelled in cyclic order, and all vertices of valence at least $3$,  is homeomorphic to $\mathbb{R}^{n-5}$.
\end{thm}

\noindent \textit{Remark.} It is easy to check that a generic tree in $\mathsf{T}(n-2)$ is trivalent at each vertex, and has exactly $n-5$ edges of finite length. These  (non-negative) lengths form parameters that parametrize a subset of  $\mathsf{T}(n-2)$ corresponding to a fixed combinatorial type; there are Catalan number of types that are obtained by Whitehead moves and the corresponding regions fit together to form $\mathbb{R}^{n-5}$ (see Figure \ref{combtypegraph}).\\

\begin{figure}
  \centering
\includegraphics[scale=0.32]{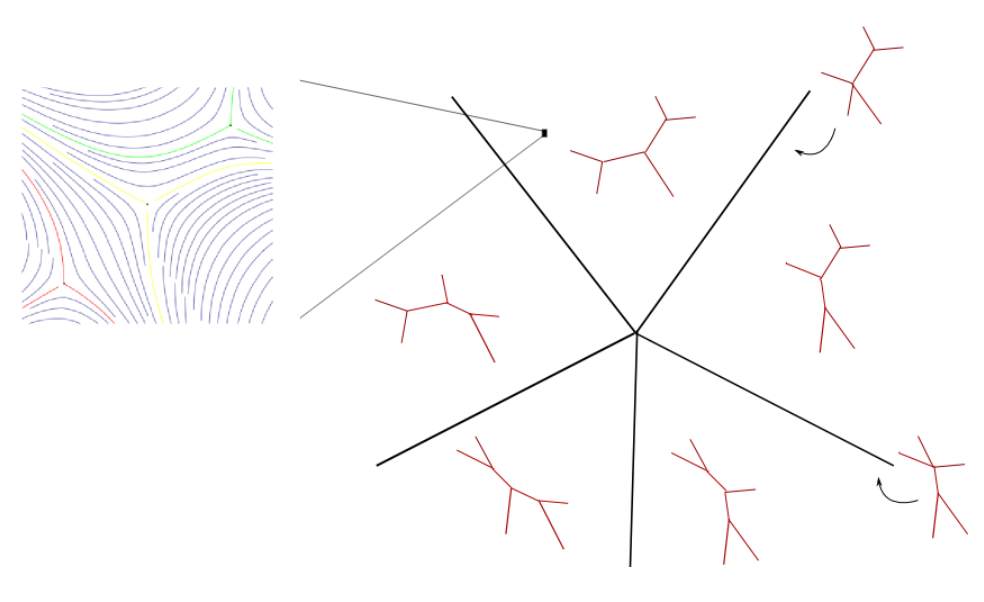}\\
  \caption{The different combinatorial types of metric trees in $\mathsf{T}(5)$ form $\mathbb{R}^2$ and parametrize the space of foliations in $\mathcal{MF}_0(5)$  (see Theorem \ref{mex} and Proposition \ref{mex2}). }
  \label{combtypegraph}
\end{figure}

As a consequence, we have:

\begin{prop}\label{mex2} For $n> 4$ the space of foliations $\mathcal{MF}_0(n)$ on $\cp$ with exactly one pole-singularity of order $n$,  is homeomorphic to $\mathbb{R}^{n-5}$.
\end{prop}
\begin{proof}
Let $\Psi_0:  \mathcal{MF}_0(n) \to \mathsf{T}(n-2)$ be the map that assigns to a foliation its leaf-space.
It is not difficult to construct an inverse map:  given a planar metric tree in $\mathsf{T}(n-2)$, we arrange foliated half-planes and foliated infinite strips  in the pattern prescribed by the tree, and identify their  boundaries. Note that the strip widths are prescribed by the edge-lengths of the tree.
The proposition then follows from Theorem \ref{mex}.
\end{proof}

\subsection*{Prescribing horizontal and vertical trees}

To complete the proof of Au-Wan's theorem stated in \S1, it remains to show that the map
\begin{equation*}
\Phi_1: Q_0(n) \to \mathcal{MF}_0(n) \times \mathcal{MF}_0(n)
\end{equation*}
 is a homeomorphism.

It suffices to define the inverse map, that is, given a pair of measured foliations (or equivalently, their metric trees), construct a holomorphic quadratic differential which has these as its vertical and horizontal foliations. Such a quadratic differential can be constructed by attaching Euclidean half-planes and bi-infinite strips to each other by isometries  (half-translations) on their boundaries; the standard differential $dz^2$ on each piece then descends to a well-defined holomorphic quadratic differential on the resulting surface (\textit{c.f.} Remark (2) after Definition \ref{fols}). 

This then becomes a combinatorial problem, which was solved by Au-Wan who gave a more general construction, that works for metric trees with countably many edges:

\begin{theorem}[Theorem 4.1 of \cite{AuWan2}]\label{aw}  Given two properly embedded planar metric trees $H,V$ in $\mathbb{C}$ and a bijection $f$ between the infinite rays of $H$ and the complementary regions of $V$, there is a unique quadratic differential on $\mathbb{C}$ or $\mathbb{D}$ with induced horizontal and vertical foliations that have leaf-spaces $V$ and $H$ respectively. Moreover, the arrangement of their foliated half-planes induces the prescribed bijection $f$.
\end{theorem}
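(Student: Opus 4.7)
The plan is to construct the quadratic differential explicitly as a gluing of standard Euclidean pieces whose combinatorial arrangement is dictated by $H$ and whose metric gluing data is dictated by $V$, then to establish uniqueness by invoking Jenkins's canonical decomposition. For the construction, to each infinite ray of $H$ I associate a Euclidean horizontal half-plane $\{\mathrm{Im}(z) > 0\}$ equipped with $dz^2$, and to each finite edge $e$ of $H$ of length $\ell(e)$ I associate a horizontal strip of width $\ell(e)$. I then use $V$ to prescribe how to glue them: each horizontal half-plane is placed in the complementary region of $V$ specified by $f$, and its bi-infinite boundary is marked with breakpoints at successive distances equal to the lengths of the edges of $V$ along the boundary path of that region; each boundary component of a horizontal strip is marked analogously by the edges of $V$ lying on that side. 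Adjacent pieces sharing a boundary segment (corresponding to a common edge or ray of $V$) are identified by the unique orientation-reversing half-translation that matches the breakpoints and preserves the horizontal foliation.

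Next, I would verify that the resulting space $S$ has the desired structure. The vertices of $V$ become cone points of the flat metric and hence zeros of the induced quadratic differential $q$, with cone angle $(k+2)\pi$ at a vertex of valence $k+2$. By construction the horizontal leaf-space is $H$, and since the union of vertical critical trajectories is exactly the embedded copy of $V$, the vertical leaf-space is $V$. The gluing pattern being tree-like ensures $S$ is simply connected with one end, so by uniformization $S$ is conformally equivalent to either $\mathbb{C}$ or $\mathbb{D}$, the dichotomy being determined by the aggregate extent of the assembly. For uniqueness, I would apply Jenkins's decomposition (Theorem~\ref{thm3} together with Proposition~\ref{fstruc}) to any other quadratic differential $q'$ realizing the data $(V, H, f)$: such a $q'$ decomposes into horizontal half-planes and strips in the combinatorial pattern of $H$, with widths matching $H$'s edge-lengths and with gluing lengths along the vertical critical trajectories matching $V$'s edge-lengths. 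This yields a canonical piece-by-piece half-translation isomorphism between $q'$ and the constructed $q$.

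The main obstacle lies in verifying that the gluings close up consistently at each vertex. At a zero of order $k$ corresponding to a vertex $v$ of $V$ of valence $k+2$, one must show that the $2(k+2)$ local sectors produced by the assembly, alternately bounded by horizontal and vertical critical trajectories, arrange themselves into the cyclic order prescribed by the planar embeddings of $V$ and $H$. The bijection $f$ is precisely what ensures this cyclic compatibility: each complementary region of $V$ at $v$ contains the ray of $H$ dictated by $f$, so the local combinatorics at $v$ as seen from $H$, namely which horizontal prongs of the zero separate which half-planes and strips, agree with the vertical structure prescribed by $V$. Once this local compatibility is secured at every vertex, the global construction assembles unambiguously and the quadratic differential is determined.
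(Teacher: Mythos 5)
You should first note that the paper does not prove this statement at all: it is imported verbatim from Au--Wan (Theorem 4.1 of \cite{AuWan2}, with uniqueness from their Theorem 4.2), so there is no internal proof to compare yours against. Judged on its own, your outline is a reasonable reconstruction of the gluing construction when $V$ and $H$ have finitely many edges, but it misses the part of the theorem that carries the real analytic content and that this paper actually uses. In Proposition \ref{new} the theorem is applied to the universal-cover lifts $V'$ and $H'$, which are properly embedded trees with \emph{infinitely} many edges and rays. Your argument is finitistic throughout: ``the gluing pattern being tree-like ensures $S$ is simply connected with one end, so by uniformization $S$ is $\mathbb{C}$ or $\mathbb{D}$'' only makes sense for a finite assembly, and in that case the answer is always $\mathbb{C}$ (Remark 1 following the theorem; Theorem 4.5 of \cite{AuWan2}). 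The $\mathbb{C}$-versus-$\mathbb{D}$ dichotomy in the statement exists precisely because of infinite trees, where existence requires an exhaustion by finite subtrees together with a convergence argument, and where uniqueness is a separate nontrivial theorem. None of this appears in your sketch, so as written it does not prove the statement in the generality in which the paper needs it.

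Two further gaps. First, you correctly single out the closing-up of sectors at each vertex as ``the main obstacle'' and then assert that the bijection $f$ resolves it; it does not, for an \emph{arbitrary} bijection. The local gluings close up into a planar surface only when $f$ is compatible with the cyclic orders determined by the embeddings of $V$ and $H$; this compatibility must either be imposed as a hypothesis (as Au--Wan do in their precise formulation) or verified, and your construction can fail outright without it. Second, your uniqueness step contains the right idea --- the translational ambiguity in identifying two bi-infinite boundary lines is killed by matching distances between singularities as measured in the other tree --- but invoking ``Jenkins's decomposition'' does not by itself deliver this. One must argue as in the paper's Lemma \ref{inj}, including the degenerate case where one side of a gluing line carries no singularity (a bouquet of half-planes), where the translation genuinely does not affect the metric; and for infinite trees even this bookkeeping does not immediately yield uniqueness. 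I would either restrict your claim to the finite-edge case and cite \cite{AuWan2} for the rest, or supply the exhaustion and limit arguments explicitly.
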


\noindent \textit{Remarks.} (i) In the case that $V$ and $H$ have finitely-many edges (as is the case for metric trees in $\mathcal{T}(n+2)$) they showed that the resulting quadratic differential is in fact defined on the complex plane $\C$ (see Theorem 4.5 of \cite{AuWan2}).

(ii) The uniqueness of the quadratic differential obtained is clarified in Theorem 4.2 of \cite{AuWan2}: they show that if there are homeomorphisms $F,G:\mathbb{C} \to \mathbb{C}$ that restrict to isometries of $V$ and $H$ respectively,  then the quadratic differential that realizes $(V,H, f)$ is identical to the one that realizes $(V,H, G\circ f\circ F^{-1})$. \\

Thus, by Remark (ii) above,  to define the inverse of $\Phi_1$, it suffices to prescribe the bijection $f$ uniquely. We can do this by defining, assigning to each $i$ in the cyclically ordered set $\{1,2,\ldots, n-2\}$, the complementary region of $V$ that is enclosed by the infinite rays of $V$ labelled $i-1$ and $i$  (and possibly other edges of $V$).

\section{Proof of Theorem \ref{thm1}} 

In \S4.1-4.3, we shall deal with the case when $S$ is the surface $\C^\ast$, and complete the proof of Theorem \ref{thm1}.  In these sections  $n,m\geq 2$ will  be the orders of the poles at $0$ and $\infty$ respectively, such that at least one of $n,m$ is strictly greater than two, and we shall fix an asymptotic direction at each pole.  The special case when $n=m=2$ is dealt with in \S4.4.

\subsection{Foliations on $\C^\ast$}  

Following the notation introduced earlier, $\mathcal{MF}_0(n,m)$ is the space of measured foliations on $\C^\ast$ with pole-singularities of orders $n$ and $m$ at $0$ and $\infty$ that have the prescribed asymptotic data at the two poles. 

Topologically, a punctured plane can be thought of as a bi-infinite Euclidean cylinder. Any measured foliation in $\mathcal{MF}_0(n,m)$ can be decomposed into a foliation without any prong-singularities on a finite-modulus annulus $A$ in the middle of the cylinder, and  two model foliations in punctured-disk neighborhoods of the two ends, i.e.\ around $0$ and $\infty$, that lie in $\mathcal{P}_n$ and $\mathcal{P}_m$ respectively.

In what follows, the  \textit{transverse measure} $\tau_F$  of a foliation $F \in \mathcal{MF}_0(n,m)$ shall refer to the transverse measure of {the homotopy class of} a loop around the puncture(s), unless otherwise specified. (See Definition \ref{tm}.)

The leaf-space of the restriction of $F$ to $A$  is either
\begin{itemize}
\item[(a)] if $\tau_F>0$, an embedded circle homotopic to the core curve of the bi-infinite cylinder with length equal to $\tau_F$, or 
\item [(b)] if $\tau_F=0$, an embedded interval corresponding to the ring domain $A$ of length equal to the transverse measure across $A$.
\end{itemize}

The leaf-space of the entire foliation $F$ then comprises metric trees that are the leaf-spaces of the model foliations on $D_0$ and $D_\infty$ respectively, attached to the circle or interval corresponding to $A$ as above. See Figure 5. Although we shall not need this fact, we mention here that this metric graph recovers the measured foliation $F$, except when the transverse measure $\tau_F>0$,  in which case one needs the additional data of the number of Dehn-twists across $A$. \\

\begin{figure}
    \centering
 \includegraphics[scale=0.42]{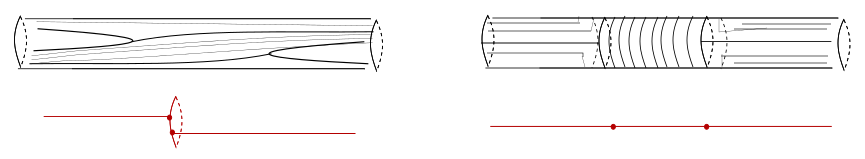}\\
  \caption{Possible measured foliations in $ \mathcal{MF}_0(3,3)$  and their leaf-spaces -- with transverse measure positive (shown on the left) and zero (shown on the right). }
  \label{singfig}
\end{figure}

For the following parametrization of $\mathcal{MF}_0(n,m)$, we shall use the above decomposition of a measured foliation on $\C^\ast$,  into model foliations on punctured disks and a foliated annulus $A$:

\begin{prop}\label{prop1} If $n,m\geq 3$, the space $\mathcal{MF}_0(n,m)$ is homeomorphic to $\mathbb{R}^{n+m-4}$. In the case that one of the poles has order $2$, say $n=2$, then  $\mathcal{MF}_0(2,m)$ is homeomorphic to $\mathbb{R}^{m-3}$ if the asymptotic direction at the order two pole is $0$ (i.e.\ the transverse measure is zero), otherwise it is homeomorphic to $\mathbb{R}^{m-1}$.
\end{prop}
\begin{proof} 
We first consider the case when $n,m>2$. Let $F\in \mathcal{MF}_0(n,m)$. 
Let $D_0$ and $D_\infty$ be neighborhoods of $0$ and $\infty$ respectively, such that $F\vert_{D_0} \in \mathcal{P}_n$ and $F\vert_{D_\infty} \in \mathcal{P}_m$. Then $A: = \mathbb{C}^\ast \setminus (D_0 \cup D_\infty)$ is a ``central annulus"  that we shall think of as a Euclidean cylinder of finite modulus.

In the case that the transverse measure $\tau_F>0$, we shall assume that all the ``twisting" of the leaves of the foliation across $\C^\ast$ happens in $A$. (For $\tau_F=0$ this twist parameter is absent, since $A$ is then a ring domain and foliations differing by a Dehn twist are isotopic to each other.)

The foliation $F\vert_A$ can be isotoped (relative the boundary) to a foliation by straight lines of constant slope; this foliation is parametrized by two real parameters, which are the transverse measure $\tau$ around $A$, and a twist parameter  $\sigma$ across $A$. These form a parameter space homeomorphic to $\mathbb{R}^2$, exactly as in equation \eqref{pr2} in the proof of Proposition \ref{mfgn-prop}. 
By Proposition \ref{prop-pn} the model foliations on $D_0$ and $D_\infty$ are parametrized by $\mathbb{R}^{n-3}$ and $\mathbb{R}^{m-3}$ respectively, assuming we have fixed the transverse measure around the boundary to be equal to $\tau_F$, since they are glued with the boundary components of $A$.  Adding the parameters, we see that the total parameter space is $\mathbb{R}^{n+m-4}$.

When one of the pole-singularities is of order two, say $n=2$, then recall that the prescribed asymptotic direction at $0$ determines the slope of the leaves on $D_0$.  We consider two sub-cases:
\begin{itemize}
\item[(a)] If the transverse measure around the pole is zero, then so is the transverse measure around $A$.  The foliations on $D_0$ and $A$ are both ring domains, and $A$ can be absorbed into $D_0$. The possible model foliations on $D_\infty$ are parametrized by $\mathbb{R}^{m-3}$ by  Proposition \ref{prop-pn}.

\item[(b)] If the transverse measure is positive, then this agrees with the transverse measure of $A$. By Proposition \ref{prop-p2}, the model foliation on $D_0$ is then completely determined, since we have already fixed the asymptotic direction at $0$.
The parameters specifying the  entire foliation then are the (positive) transverse measure around $A$, the twist parameter across $A$, and the $(m-3)$ parameters for the foliation on $D_\infty$ as before. Hence the parameter space is $\mathbb{R}^{m-1}$. 
\end{itemize} 

As in Proposition \ref{mfgn-prop}, the topology on the space $\mathcal{MF}_0(n,m)$ is defined to be the one for which this parametrization is a homeomorphism; namely, two foliations $F_1, F_2\in \mathcal{MF}_0(n,m)$ are close if their restrictions to $D_0$ and $D_\infty$ are close in the space of model foliations $\mathcal{P}_n$ and $\mathcal{P}_m$ respectively, and so are the pairs of transverse measures $\tau_1,\tau_2$ and twist parameters $\sigma_1,\sigma_2$. 
\end{proof}

\subsection{Prescribing horizontal and vertical foliations} 
Recall from \S1 that the space $Q_0(n,m)$ is the space of meromorphic quadratic differentials on $\C^\ast$ with a pole of order $n$ and $m$ at $0$ and $\infty$ respectively (we shall continue with our assumption that one of $n,m$ is greater than two), and with prescribed asymptotic directions at the poles, denoted by the set $\mathfrak{a}$.

\subsubsection*{Compatible pair} Throughout this section, $(\mathcal{H}, \mathcal{V})  \in  \mathcal{MF}_0(n,m)\times \mathcal{MF}_0(n,m)$ will be a pair of foliations, where the space of measured foliations in the first factor has prescribed asymptotic directions given by $\mathfrak{a}$, and the second factor has opposite asymptotic directions given by $\sqrt{-1}\cdot  \mathfrak{a}$ (see Definition \ref{opp}). 
We shall further assume that these two foliations are compatible in the sense defined in \S2.2  -- first, they do not both have zero transverse measure (for the non-trivial loop in $\C^\ast$ around the punctures) and second, in the case that one of the poles has order two and their transverse measures $\tau_H$ and $\tau_V$  are positive, then they are compatible for the asymptotic direction at the order two pole (see Definition \ref{compat}). 

\subsubsection*{Outline} Our goal in this section is to construct a meromorphic quadratic differential in $Q_0(n,m)$ whose horizontal and vertical foliations are $\mathcal{H}$ and $\mathcal{V}$, respectively.  To do this, we consider the decomposition of each foliation into  model foliations in the punctured-disk neighborhoods $D_0$ and $D_\infty$ of $0$ and $\infty$ respectively, and a foliation on a central annulus $A$, as in \S4.1.  

Our strategy is to first  construct 
\begin{itemize}
\item[(a)] a flat annulus $A$ that realizes the prescribed  pair of foliations $\mathcal{H}\vert_A$ and $\mathcal{V}\vert_A$, 
\item[(b)] singular flat metrics on $D_0$ and $D_\infty$,  induced by meromorphic quadratic differentials with poles of orders $n$ and $m$ respectively at the punctures, that realize the prescribed pairs of model foliations.
\end{itemize} 
Finally, we shall glue these singular-flat pieces to get the desired meromorphic quadratic differential on $\C^\ast$. 

\subsubsection*{Constructing singular flat surfaces} 
We start with describing the construction in (a) and (b) of the outline above; (a) is handled by Lemmas \ref{lem1a} and \ref{lem1b}, and (b) is handled by Lemmas \ref{lem2a} and \ref{lem2b}. 

\begin{figure}
    \centering
 \includegraphics[scale=0.45]{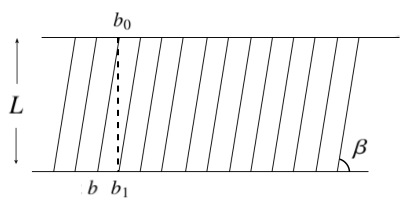}\\
  \caption{In the proof of Lemma \ref{lem1a} the flat metric on $A$ is obtained as a quotient of a strip $S(L)$ of height $L$. }
  \label{singfig}
\end{figure}

\begin{lem}\label{lem1a}
Let $t\in \mathbb{R}$ and $\mathcal{H}, \mathcal{V}$ be two measured foliations on an annulus $A$ with  positive transverse measures (around $A$)  $\tau_H$ and  $\tau_V$. Then, there is a unique flat metric (with geodesic boundary) on $A$ induced by a (constant) holomorphic quadratic differential $q$, such that the horizontal and vertical foliations of $q$ are equivalent to $\mathcal{H}, \mathcal{V}$ respectively and the difference of the twist parameters is $t$. 
\end{lem}
\begin{proof}
Passing to the universal cover, it suffices to show that there is a unique choice of $c\in \mathbb{C}^\ast$ and $L>0$ such that the desired flat annulus $A$ is the quotient of the infinite strip $S(L) = \{ w \in \C\ \vert \ 0\leq \Im (w) \leq L \}$ equipped with the quadratic differential metric $\tilde{q} = c^2dz^2$, with the infinite cyclic group $\mathbb{Z} = \langle w \mapsto w+1\rangle$. 

Let $\text{Arg}(c) = -\beta$; the transverse measures $\tau_V = \lvert c \rvert \lvert \sin \beta \rvert $ and $\tau_H = \lvert c \rvert \lvert  \cos \beta \rvert $ since they are the absolute values of the  imaginary and real parts of $\int_{[0,1]} \sqrt{\tilde{q}}$ (\textit{c.f.} the remark following Definition \ref{compat}). Thus the two transverse measures determine $\lvert c\rvert$, and an angle $\beta \in (0,\pi)$ up to an ambiguity of sign, i.e\ either $\beta$ or $\pi - \beta$.
As we shall now see, the sign of the relative twist parameter $t$ fixes the ambiguity in $\beta$, and determines the remaining parameter $L$ uniquely. 
 
Note that the horizontal foliation of $ \tilde{q}$ comprises straight lines at an angle  $\beta$ or $\pi-\beta$, and the vertical foliation comprises straight lines at an angle $\pi/2+ \beta$  or $\pi/2 - \beta$.  We shall assume that the twist parameter is measured relative to the two basepoints $b_0$ and $b_1$ on the top and bottom boundary components of $S(L)$ respectively,  that lie on the same  vertical line. Namely, the  twist parameter $\sigma_H$ of the horizontal foliation is measured as follows: consider a lift $l \subset S(L)$ of a horizontal leaf  that passes through $b_0$, and intersects the other boundary component at a point $b$; then $\sigma_H$ equals   the  (signed) transverse measure of $\mathcal{H}$ of the interval between $b_1$ and $b$ on the other boundary component.  (See Figure 6, which shows the case when the horizontal leaves make an angle $\beta$.)  We can calculate $\sigma_H$ by integrating  $\sqrt{\tilde{q}}$ along that interval, which has length $L \lvert \cot \beta\rvert$, and taking the  real part of the integral. This yields  $\sigma_H = \pm L \lvert c\rvert  \cot\beta  \cos\beta $, where the sign depends on whether the angle of the horizontal foliation is $\beta$ or $\pi-\beta$ . Similarly, the twist parameter $\sigma_V$ of the vertical foliation equals $\pm L \lvert c \rvert   \cot\beta  \sin\beta$, with the opposite dependence on the two possibilities for $\beta$.   Since the difference  $\sigma_H - \sigma_V = t$ is prescribed,   the sign of $\beta$, and $L$ are uniquely determined.
\end{proof}

\begin{lem}\label{lem1b}
Let $\mathcal{H}, \mathcal{V}$ are two measured foliations on an annulus $A$ such that 
\begin{itemize}
\item the transverse measure $\tau_H$ of $\mathcal{H}$ around $A$ is zero, i.e.\ $\mathcal{H}$ on $A$ is a ring domain, and
\item  $\mathcal{V}$ has positive  transverse measure $\tau_V>0$.
\end{itemize}
Then, there is a flat metric (with geodesic boundary)  on $A$ induced by a (constant) holomorphic quadratic differential $q$, such that the horizontal and vertical foliations of $q$ are equivalent to $\mathcal{H}, \mathcal{V}$ respectively. Moreover, $q$ is unique if we also specify the transverse measure of $\mathcal{H}$ across the annulus $A$. 
\end{lem}
\begin{proof}
As in the proof of the previous lemma, we pass to the universal cover, and consider the infinite strip  $S(L) = \{ w \in \C\ \vert \ 0\leq \Im (w) \leq L \}$ equipped with the metric induced by the quadratic differential $\tilde{q} = a^2dz^2$ for some $a\in \C^\ast$, such that $A$ is the quotient of $S(L)$ by the infinite cyclic group $\mathbb{Z} = \langle w \mapsto w+1\rangle$. 

In this case, since $\mathcal{H}$ is a ring domain, the horizontal foliation of $\tilde{q}$ is by horizontal lines in $S(L)$, and the vertical foliation is by vertical lines; consequently, $a = \tau_V \in \mathbb{R}^+$.  The remaining parameter $L$ (the ``height" of the flat annulus $A$) equals the transverse measure of $\mathcal{H}$ across $A$, if the latter is specified. 
\end{proof}

\noindent \textit{Remark.} In the case the transverse measure of $\mathcal{H}$ across $A$ is zero, the flat annulus $A$ is degenerate, i.e. is a circle of length $\tau_V$; in this case we shall still refer to $A$ as a flat metric on an annulus. \\

The construction for (b) in the outline is easier in the case the model foliations on $\mathbb{D}^\ast$ have an order two pole:

\begin{lem}\label{lem2a}  Let  $\mathcal{H}, \mathcal{V} \in \mathcal{P}_2$ be two model foliations on a punctured disk that are compatible, that is, 
\begin{itemize}
\item one of their transverse measures around the boundary of the disk is non-zero, and 
\item if both transverse measures are positive, then they are compatible in the sense of Definition \ref{compat}.
\end{itemize}
Then there exists a meromorphic quadratic differential  $q$ on $\mathbb{D}^\ast$ whose induced horizontal and vertical foliations are equivalent to $\mathcal{H}$ and $\mathcal{V}$ respectively, and the boundary circle $\partial \mathbb{D}$ is geodesic in the induced singular flat metric.  Moreover, the quadratic differential $q$ is unique if we prescribe an asymptotic direction $\theta$ at the puncture,  and in the case  both transverse  measures are positive, we require that the horizontal leaves are incident on the boundary at a prescribed angle $\beta\in (0,\pi)$.
\end{lem} 

\begin{proof}
We start with the case when one of the transverse measures equals zero, say $\tau_H = 0$ and $\tau_V>0$, where $\tau_H, \tau_V$ are the transverse measures around $\partial \mathbb{D}$ of the model foliations $\mathcal{H}$ and $\mathcal{V}$ respectively.  Passing to the universal cover $\HH$,  $\mathcal{H}$ and $\mathcal{V}$ lift to a foliation of $\HH$ by horizontal lines and vertical lines, respectively.  The quadratic differential $\tilde{q} = \tau_V^2 dw^2$ on $\HH$ has these as its horizontal and vertical foliations; this is invariant under the translations $\langle w\mapsto w+1 \rangle$ and descends to the desired  meromorphic quadratic differential  $q$ on $\D^\ast$. 

Note that in the case that $\tau_V=0$ and $\tau_H>0$, then we take $\tilde{q} =  - \tau_H^2dw^2$ on $\HH$; its horizontal and vertical foliations comprise the vertical and horizontal lines on $\HH$ respectively, and once again, this quadratic differential descends to $\D^\ast$ to define the desired $q$. 

In the case that both transverse measures $\tau_H,\tau_V>0$, then by their compatibility, we can write $\tau_H = \tau \lvert \cos \theta \rvert $ and $\tau_V = \tau \sin \theta$  for some $\tau>0$ and $\theta \in [0, \pi)$, where $\theta$ is the prescribed asymptotic direction of $\mathcal{H}$ at the order two pole. This time, in the universal cover $\mathbb{H}$ we consider the quadratic differential $\tilde{q} = a^2dw^2$ where $a = \tau e^{-i\theta}$. The horizontal and vertical foliations of $\tilde{q}$ are then  foliations by straight lines of slopes $\theta$ and $\theta + \pi/2$ (considered modulo $\pi$) respectively. In the quotient $\D^\ast = \HH/\langle w \mapsto w+1 \rangle$, the horizontal and vertical transverse measures of the boundary $\partial \mathbb{D}$ correspond to the horizontal and vertical transverse measures of the interval $[0,1] \subset \partial \HH$ which are the absolute values of the imaginary and real parts of $a$, respectively. Thus, we have obtained our desired quadratic differential $q$.  

Note that the induced metric on $\mathbb{D}^\ast$ determines semi-infinite Euclidean cylinder $E$ with geodesic boundary, and the horizontal foliation of $q$ intersects the boundary circle $\partial \mathbb{D}$ at the angle $\theta$ (which is necessarily $0$ if $\tau_H=0$). To obtain the desired angle $\beta \in (0,\pi)$ (in the case that $\tau_H>0$), we consider the cylindrical end $\overline{E}$ embedded in $E$, which is bounded by a geodesic circle $C$ chosen such that the horizontal foliation intersects $C$ at angle $\beta$. (See Figure 7.)  Since $\overline{E} \cong \mathbb{D}^\ast$, the restriction $q\vert_{\overline{E}}$ defines the desired quadratic differential. 

To show the uniqueness statement, observe that since these model foliations do not have any prong-singularities, the metric induced by $q$ is in fact flat, without any singularities. If $\partial \mathbb{D}$ is geodesic, then passing to the universal cover, we obtain a Euclidean half-plane bounded by a bi-infinite straight line, that we can realize as the upper half-plane $\HH$ equipped with a constant quadratic differential $q = a^2 dz^2$ for some $a \in \C^\ast$. The constant $a$ is uniquely determined by the prescribed asymptotic direction $\theta$ and the requirement that the transverse measures on the quotient $\D^\ast = \HH/\langle w \mapsto w+1 \rangle$ are equal to the prescribed $\tau_H, \tau_V$, exactly as described above. Finally, (in the case that $\tau_H>0$)  the sub-cylinder $\overline{E}$ bounded by the geodesic circle intersecting the horizontal foliation at angle $\beta$ is unique up to isometry. 
\end{proof}

\begin{figure}
    \centering
 \includegraphics[scale=0.33]{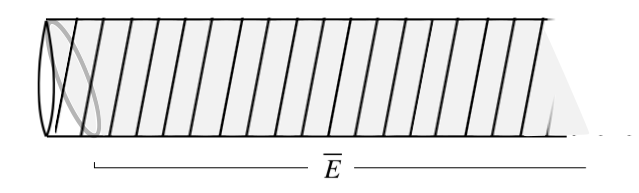}\\
  \caption{A cylindrical end corresponding to a pole of order two with positive transverse measure has an embedded sub-cylinder $\overline{E}$ (shown shaded) with geodesic boundary that intersects the horizontal foliation at angle $\beta \in (0,\pi)$. }
  \label{singfig}
\end{figure}

\medskip

For poles of higher order, the construction for step (b)  shall use the work of Au-Wan in the planar case by extending the model foliation on $\mathbb{D}^\ast$ to $\mathbb{C}^\ast$ and passing to the universal cover $\C$:

\begin{lem}\label{lem2b} Let $r > 2$ and let  $\mathcal{H}, \mathcal{V} \in \mathcal{P}_r$ be two model foliations on a punctured disk such that at least one of their transverse measures around the boundary of the disk is positive. Then there exists a unique meromorphic quadratic differential  $q$ on $\mathbb{D}^\ast$ such that 
\begin{itemize}
\item[(i)] the horizontal and vertical foliations  of $q$ are equivalent to $\mathcal{H}$ and $\mathcal{V}$ respectively, 
\item[(ii)]  the boundary circle $\partial \mathbb{D}$ is geodesic in the induced singular-flat metric, and when both transverse  measures are positive, the horizontal leaves are incident on the boundary at a prescribed angle $\beta\in (0,\pi)$,
\item[(iii)]  the induced metric has at least one prong-singularity on the boundary circle, and 
\item[(iv)] $q$ has a prescribed asymptotic direction at the pole.
\end{itemize} 
\end{lem}
\begin{proof}
The idea of the proof is to reduce to the planar case  as in  Theorem \ref{aw}, by considering the the lifts of the foliations to the universal cover $\mathbb{H}$, and then extending them to $\C$. 
Here, the universal covering map  is $\pi:\mathbb{H} \to \mathbb{D}^\ast$ defined by $\pi(w) = e^{2\pi i w}$. The group of deck-translations $\pi_1(\C^\ast) = \mathbb{Z}$ acts on $\mathbb{H}$ by the group of translations generated by $w\mapsto w + 1$. 

Let $\tau_H, \tau_V$ be the transverse measures of $\mathcal{H}$ and $\mathcal{V}$ respectively; we shall denote the lifts of the latter foliations by $\widetilde{\mathcal{H}}$ and $\widetilde{\mathcal{V}}$ respectively.

In the case the transverse measure $\tau_H>0$ (resp. $\tau_V>0$), one can isotope the leaves of $\widetilde{\mathcal{H}}$ (resp. $\widetilde{\mathcal{V}}$) so that they are orthogonal to $\mathbb{R}$, the boundary of the upper half-plane. Then we can extend $\widetilde{\mathcal{H}}$ (resp. $\widetilde{\mathcal{V}}$) to the entire complex plane $\C$ by appending the foliation by vertical lines on the lower half-plane.

On the other hand, in the case the transverse measure   $\tau_H=0$ (resp. $\tau_V=0$), the entire boundary of the upper half-plane comprises leaf segments of the lifted foliation between prong-singularities. This lifted foliation can be extended to $\C$ by appending the foliation by \textit{horizontal} lines on the lower half-plane.

\begin{figure}
    \centering
 \includegraphics[scale=0.5]{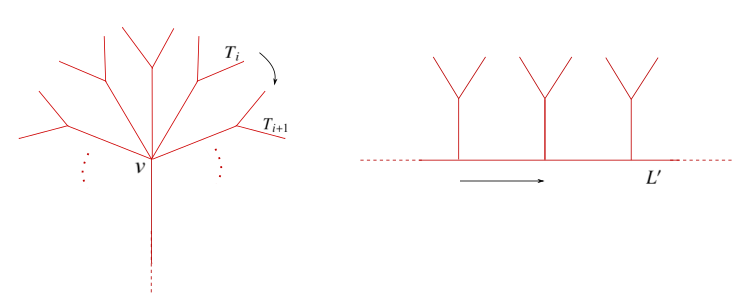}\\
  \caption{The $\mathbb{Z}$-invariant metric tree $H$ has the structure on the left if $\tau_H=0$, and the structure on the right if $\tau_H>0$. (The figure assumes $r=4$, so that the quotient by the $\mathbb{Z}$-action has two infinite rays towards the puncture at $0$.) }
  \label{singfig}
\end{figure}

Let $H, V$ be the metric trees that are the leaf-spaces for these extensions of the lifts of $\mathcal{H}, \mathcal{V}$  respectively. Both are metric trees with an action of the infinite cyclic group $\mathbb{Z}$ on them. The structure of these metric trees depends on the transverse measures; we now describe this for $H$ (see Figure 8):
\begin{itemize}
\item[(a)] If $\tau_H =0$, then the foliation on the lower half-plane corresponds to an infinite ray $L$ of $H$; the leaf-space corresponding to the chain of critical leaves constituting $\mathbb{R}$ is a single vertex $v$ that is the root of $L$. The rest of $H$ then comprises a collection of pairwise-isometric metric trees $T_i$, where $i\in \mathbb{Z}$. Each $T_i$ is rooted at $v$, and descends, via the covering map $\pi:\mathbb{H} \to \mathbb{D}^\ast$, to the metric graph that is the leaf space of $\mathcal{H}$. The generator of the group of deck-translations $\pi_1(\C^\ast) = \mathbb{Z}$ acts on $H$ by fixing $v$ (and the infinite ray $L$) and takes $T_i$ isometrically to $T_{i+1}$, for each $i\in \mathbb{Z}$. 

\item[(b)] if $\tau_H>0$, then the boundary $\mathbb{R}$ of the upper half-plane determines a bi-infinite line $L^\prime$ in $H$; note that $L^\prime$ is also the leaf-space of the foliation on the entire lower half-plane.  The rest of $H$ comprises metric trees rooted at vertices on $L^\prime$ invariant under the group $\mathbb{Z}$ of deck-translations that acts on $H$ by translations, such that the quotient is the metric graph that is the leaf-space of $\mathcal{H}$ on $\D^\ast$. Note that $L^\prime$ descends to a cycle on this metric graph of length $\tau_H$. 

\end{itemize}

The same description holds for $V$, with $\tau_V$ replacing the role of $\tau_H$ in (a) and (b) above.
As in \S3, these trees $H$ and $V$ can be topologically embedded in $\C$, via equivariant embeddings $i_H: H \to \C$ and $i_V:V\to \C$ where $\mathbb{Z}$ acts on the domain tree as described in (a) or (b) above, and on $\C$ by translations generated by $w\mapsto w+1$. 

Moreover, the infinite rays of $H$, and the complementary regions of $V$, acquire a labelling as follows:

 Let $\{\alpha_1, \alpha_2,\ldots, \alpha_{r-2}\}$ denote the cyclically ordered foliated half-planes surrounding the pole-singularity of $\mathcal{H}$ at $0$ on $\D^\ast$, where $\alpha_1$ is the half-plane whose boundary is asymptotic to the prescribed asymptotic direction $\theta$ and $\theta + 2\pi/(r-2)$.  These correspond to the complementary regions of the metric tree for $\mathcal{V}$. Lifting the labelling of the foliated half-planes to the universal cover and the extended foliation, this induces a labelling of the complementary regions of $V$ in $\C$, by the index set $\{\alpha^i_j  \mid i\in \mathbb{Z}, 1\leq j\leq r-2\}$.

 We can also label the infinite rays of the metric tree for $\mathcal{H}$ on $\mathbb{D}^\ast$ by  $\{a_1, a_2,\ldots, a_{r-2}\}$  in cyclic order, such that the label $a_1$ corresponds to the leaf-space of the half-plane $\alpha_1$ (as defined above). 
Passing to the universal cover, and its extension, we obtain a labelling of the infinite rays of $H$   by the index set  $\{a^i_j \mid i\in \mathbb{Z}, 1\leq j\leq r-2\}$.

We can now prescribe a bijection $f$ between the complementary regions of $V$ and the infinite rays of $H$ by the corresponding map of labels $\alpha^i_j \mapsto a^i_j$. Thus, we have a pair of metric trees $V$ and $H$ on $\mathbb{C}$, and a bijection $f$ between the complementary regions of $V$ and the infinite rays of $H$. By Theorem \ref{aw}, there is a singular-flat surface $\mathsf{S}$ that is conformally either $\mathbb{C}$ or $\mathbb{D}$, with horizontal and vertical foliations having metric graphs $V, H$, which induces the prescribed bijection $f$.

By construction, the bijection $f$ is $\mathbb{Z}$-equivariant. Namely, let $t_V$ be the relabelling of complementary regions of $V$ and  $t_H$ be the relabelling of infinite rays of $H$ induced by the self-homeomorphisms of $\C$ that extends the respective actions of $\mathbb{Z}$ on the trees. Then $t_H\circ f \circ t_V^{-1} = f$.  (Note that $t_V$  relabels  $\alpha^i_j$ by $\alpha^{i-1}_j$  and  $t_H$ relabels  $a^i_j$ by $a^{i-1}_j$ for each $i\in \mathbb{Z}$.) 
By the $\mathbb{Z}$-equivariance of the embeddings $i_H,i_V$ of $H$ and $V$ into $\C$, together with the uniqueness part of Au-Wan's theorem (see Remark (ii) following Theorem \ref{aw}), the singular-flat surface $\mathsf{S}$ is induced by a holomorphic quadratic differential on $\C$ that is invariant under the action of $\mathbb{Z} = \langle w \mapsto w + 1 \rangle$.  The singular-flat metric thus passes to the quotient annulus $\overline{\mathsf{S}}$, with horizontal and vertical  foliations equal to the extensions of  $\mathcal{H}$ and  $\mathcal{V}$ respectively.

Next, we show that in fact the singular-flat annulus $\overline{\mathsf{S}}$ thus obtained is conformally the punctured plane $\mathbb{C}^\ast$. Since the metric trees (and their quotients) are complete, so are the singular-flat metrics on $\overline{\mathsf{S}}$ and $\mathsf{S}$.  Moreover, it is easy to prove that the circumference of a disk of radius $R$ on $\mathsf{S}$ grows linearly with $R$:  since there is a lower bound on any arc cutting across any fundamental domain of the $\mathbb{Z}$-action on $\mathsf{S}$, such a disk will intersect at most $O(R)$ copies of the fundamental domain. In particular, such a disk will contain $O(R)$ singularities of the singular-flat  metric, and an application of the Gauss-Bonnet theorem then completes the argument. 
 We can then invoke the main result of \cite{Ahlfors-paper} (see also pg. 329 of \cite{Sario}) to conclude that the underlying Riemann surface is parabolic, that is, $\mathsf{S}$ is conformally equivalent to the complex plane $\mathbb{C}$.  The quotient $\overline{\mathsf{S}}$ must then be conformally equivalent to  $\mathbb{C}/\mathbb{Z}$, namely the punctured plane $\mathbb{C}^\ast$. 
 
 It remains to finally restrict to a suitable punctured disk $\D^\ast \subset \C^\ast$. For this, note that by construction, the singular flat metric on $\mathsf{S}$ has a  half-plane  $E$ where the horizontal and vertical foliations are transverse foliations without singularities, namely, the lower half-plane on $\C$. On $E$ the metric is induced by a constant quadratic differential invariant under a translation such that in the quotient we obtain a pole of order two at $\infty$ on $\C^\ast$ (\textit{c.f.} Definition \ref{sing2}) a neighborhood of which is isometric to a semi-infinite Euclidean cylinder.  If both transverse measures $\tau_H,\tau_V$ are positive,  consider the maximal (with respect to inclusion) isometrically embedded  sub-cylinder $\overline{E}$ such that the horizontal foliation intersects the geodesic boundary $\partial \overline{E}$ at a constant angle $\beta \in (0,\pi)$. (See Figure 7.)  Note that if one of the transverse measures $\tau_H$ or $\tau_V$ is zero, then the angle of intersection $\beta$ equals $0$ and $\pi/2$ respectively. 
 
The maximality of $\overline{E}$ implies that the induced singular-flat metric necessarily has prong-singularities on the boundary (otherwise we could take a larger disk). Excising $\overline{E}$, we are left with a singular-flat metric on a conformal puctured disk $\D^\ast$. The corresponding meromorphic quadratic differential on $\D^\ast$ is the desired $q$, whose horizontal and vertical foliations are, by our construction,  $\mathcal{H}$ and $\mathcal{V}$ respectively, and properties (i)-(iii) in the statement of the Lemma are satisfied. 

This singular-flat metric on $\mathbb{D}^\ast$ satisfying properties (i)-(iii)  is unique up to isometry: given any other such singular-flat  punctured disk $D^\prime$ we can pass to the universal cover $\mathbb{H}$ and attach a Euclidean half-plane $E^\prime$  by an isometry on the boundary line (which is a straight line by property  (ii)). Here, $E^\prime$ is equipped with the metric induced by a constant quadratic differential whose horizontal foliation intersects $\partial E^\prime$ at the prescribed angle $\beta \in (0, \pi)$ if both transverse measures are positive, and at an angle $0$ or $\pi/2$ otherwise. Thus, we obtain a singular-flat surface $\mathsf{S}^\prime$ realizing $H$ and $V$. By the uniqueness part of Theorem \ref{aw}, $\mathsf{S}^\prime$ is isometric to $\mathsf{S}$ via an equivariant isometry, and the quotient surface  $\overline{\mathsf{S}^\prime}$ is isometric to $\overline{\mathsf{S}}$. This isometry takes the quotient $\overline{E^\prime}$ of the half-plane $E^\prime$, to the maximal semi-infinite Euclidean cylinder $\overline{E}$ we had defined on $\overline{\mathsf{S}}$. (Such a maximal semi-infinite cylinder $\overline{E}$ in a cylindrical end is unique when we fix the prescribed angle $\beta$.)  Hence the isometry restricts to one between $\overline{\mathsf{S}^\prime} \setminus \overline{E^\prime}$ and $\overline{\mathsf{S}} \setminus \overline{E}$, that is, $D^\prime$ is isometric to the punctured disk with the singular flat metric induced by $q$. Since a conformal map between punctured disks is a rotation, the isometry equals the identity map if property (iv) is satisfied, that is, we prescribe the asymptotic direction of $q$ at the puncture. \end{proof}

\medskip 

We can now prove the main result of this section:

\begin{prop}\label{prop2} Let $\mathcal{H}$ and $\mathcal{V}$ be a compatible pair of measured foliations, as introduced in the beginning of \S4.2. Then there exists a unique meromorphic quadratic differential in $Q_0(n,m)$ that has horizontal foliation equivalent to $\mathcal{H}$ and vertical foliation equivalent to $\mathcal{V}$.

\end{prop}
\begin{proof}
 From the proof of Proposition \ref{prop1}, the foliations $\mathcal{H}$ and $\mathcal{V}$ decompose into model foliations $H_0,V_0 \in \mathcal{P}_n$ respectively in an open punctured-disk neighborhood  $D_0$ of $0$, into model foliations $H_\infty,V_\infty \in \mathcal{P}_m$ respectively in an open  punctured-disk neighborhood  $D_\infty$ of $\infty$, and a foliated annulus $A$ inbetween, such that $\C^\ast = D_0 \cup A\cup D_\infty$ for both. 
 
 Using Lemma \ref{lem1a} or \ref{lem1b}, there is a unique flat metric on $A$ determined by the parameters of the restrictions $\mathcal{H}\vert_A$ and $\mathcal{V}\vert_A$, such that they are the horizontal and vertical foliations, respectively, with the prescribed \textit{relative} twist parameter (i.e\ the difference of the twist parameters, as in Lemma \ref{lem1a}), and the boundary components are geodesic circles. From the proofs of these lemmas,  the angle at which the horizontal foliation on $A$ intersects the boundary components is uniquely determined; we call this angle $\beta$. 
  
Using Lemma \ref{lem2a} or \ref{lem2b}, there are uniquely defined singular-flat metrics on $D_0$ and $D_\infty$ respectively, such that 
\begin{itemize}
\item[(a)] they realize the pairs $(H_0,V_0)$ and $(H_\infty,V_\infty)$ respectively, as their horizontal and vertical foliations,  
\item[(b)] the boundary circle is geodesic in both cases, and the horizontal foliation intersects them at the angle $\beta$ as determined above, and 
\item[(c)] the horizontal foliations $H_0$ and $H_\infty$ have the prescribed asymptotic directions at the poles at $0$ and $\infty$ respectively. 
\end{itemize}

Finally, we glue these singular flat surfaces together to obtain the desired singular-flat metric on $\C^\ast$, and the corresponding meromorphic quadratic differential $q$ in $Q_0(n,m)$.  Since the asymptotic directions at the poles $D_0$ and $D_\infty$ are prescribed, the only freedom in this gluing is the number of Dehn-twists in the gluing of $\partial D_0$ with a boundary component of $A$, and in the gluing of $\partial D_\infty$ with the other boundary component of $A$. Since $A$ is an annulus, it is the difference of these two integers that matters to determine the final marked singular-flat structure on $\C^\ast$. (Note that this discussion is relevant  only if $A$ is not a ring domain, since otherwise all markings are equivalent by sliding around a closed leaf.)

This  integer parameter $d\in \mathbb{Z}$  can be measured in terms of the gluing in the universal cover as follows: choose an integer labelling of the fundamental domains of the $\mathbb{Z}$-action on the lifts  $\widetilde{D}_0, \widetilde{A}$ and $\widetilde{D}_\infty$, the lifts of $D_0,A$ and $D_\infty$ respectively,  and let $b_0$ and $b_1$ be a choice of basepoints on the boundary components of $\tilde{A}$ that are on the same vertical line (as in Figure 6). Then, if the gluing identifies $b_0$ with a point in the  boundary of the $r$-th fundamental domain of $\widetilde{D}_0$, and $b_1$ with a point in the  boundary of the $s$-th fundamental domain of $\widetilde{D}_\infty$, we define $d := r-s$.

Recall that  the flat metric on the central annulus $A$ takes care of the relative twist parameter of the two foliations $\mathcal{H}\vert_A$ and $\mathcal{V}\vert_A$. The actual twist parameters of $\mathcal{H}$ and $\mathcal{V}$ are then realized by choosing the integer parameter $d$ appropriately.  There is a unique such choice, and the marked singular-flat metric on $\C^\ast$, and therefore $q$, is determined uniquely. 
\end{proof}

\subsection{Proof of Theorem \ref{thm1}} 
We can now complete:

\begin{proof}[Proof of Theorem \ref{thm1}]
The image of the map $\Phi_2: Q_0(n,m) \to \mathcal{MF}_0(n,m) \times \mathcal{MF}_0(n,m)$ lies in the subspace $\mathcal{S}$ of pairs of foliations that are compatible in the sense defined in \S2.2. By Proposition \ref{prop2}, we obtain an inverse to the map $\Phi_2$ defined on $\mathcal{S}$. This implies that, in particular, $\Phi_2$ is an injective map that surjects on to $\mathcal{S}$. Note that the domain $Q_0(n,m) \cong \mathbb{R}^{2n+2m-8}$, and the the target $\mathcal{S}$ is a subspace of  $\mathcal{MF}_0(n,m) \times \mathcal{MF}_0(n,m)$, which is homeomorphic to $\mathbb{R}^N$ for some $N$. Here $N$ depends on $n,m$ and, in the case that one of them equals $2$, the asymptotic direction at that pole. (In particular, $N = 2n+2m-8$ if both $n,m>2$.) 

The continuity of the map $\Phi_2$, or more generally,  the map $\Phi$ from the space of quadratic differentials to the space of measured foliations on any surface that assigns the induced horizontal (or vertical) foliation to any quadratic differential,  is a standard fact. (In the case of a closed surface, see for example the proof of Theorem 4.7 of \cite{Kerckhoff}, and the references therein.)  Briefly, since the topology of the leaf-space, i.e.\ the combinatorial structure of the metric tree of a measured foliation,  is locally constant, 
the induced horizontal and vertical foliations are locally determined by the corresponding transverse measures. The latter, in turn, are determined by the real and imaginary parts of the periods $\int_\gamma \sqrt q$,  where $\gamma$ varies over a collection of arcs between the prong-singularities and homotopically non-trivial simple closed curve on the underlying surface.  The continuity of $\Phi$ then follows from the continuity of the relative period map, defined on the space of ``framed" quadratic differentials on the surface (see, for example, Theorem 4.12 of \cite{Bridgeland-Smith}).

Hence by the Invariance of Domain, the map $\Phi_2$ is a homeomorphism onto its image.
\end{proof} 

\noindent \textit{Remark.} A consequence of this is that the subspace  $\mathcal{S}$ of compatible pairs of foliations, is homeomorphic to $\mathbb{R}^{2n+2m-8}$; this can be verified independently, by analyzing the corresponding parameter space, as in \S4.1.

\subsection{The case when $n=m=2$} 

In the special case where both poles have order two,  the  meromorphic quadratic differential $q$  on $\C^\ast$ is necessarily of the form $q = \frac{a^2}{z^2} dz^2$ where $a\in \C^\ast$. In this case the asymptotic directions at the two poles must be the same, and equal to $-\text{Arg}(a)$ (\textit{c.f.} Definition \ref{sing2}). Thus the space $Q_0(2,2)$ of such quadratic differentials with prescribed (and necessarily equal) asymptotic directions  at the poles is homeomorphic to $\mathbb{R}^+$, which can be thought of as the remaining parameter $\lvert a \rvert$. 

Let $\mathcal{MF}_0(2,2)$ be the space of measured foliations on $\C^\ast$ with pole-singularities of order $2$ at $0$ and $\infty$, and with prescribed  (and equal) asymptotic directions at the poles.  By Proposition \ref{prop-p2}, a measured foliation in  $\mathcal{MF}_0(2,2)$ is determined by the transverse measure around $\C^\ast$.  Note that this transverse measure is zero if the asymptotic directions are $0$, and the foliation lifts to a foliation on $\C$ by horizontal lines. 
We then have:

\begin{lem} A pair $(\mathcal{H}, \mathcal{V}) \in \mathcal{MF}_0(2,2) \times \mathcal{MF}_0(2,2)$  (where the prescribed asymptotic directions in the first and second factor are opposite) is realizable as the horizontal and vertical foliations of some $q\in Q_0(2,2)$  if and only if either (a) exactly one of the transverse measures is zero, and (b) both transverse measures are positive, and compatible in the sense of Definition \ref{compat}.   
\end{lem}
\begin{proof} 
The necessity of either (a) or (b) being satisfied, follows from the compatibility of the horizontal and vertical foliations (see \S2.2.). 
In the other direction, a meromorphic quadratic differential $q \in Q_0(2,2)$  is obtained in either case as follows:

Let $\tau_H, \tau_V$ be the transverse measures around $\C^\ast$ of $\mathcal{H}, \mathcal{V}$ respectively. 

If $\tau_H=0$ and $\tau_V>0$,  the quadratic differential $\tilde{q} = \tau_V^2 dw^2$ on $\C$, is invariant under the group of translations $\mathbb{Z} = \langle w \mapsto w+1 \rangle$ and defines the desired quadratic differential $q$ on the quotient $\C^\ast = \C/\mathbb{Z}$.  If $\tau_V =0$ and $\tau_H>0$, then the quadratic differential $\tilde{q} =  - \tau_H^2 dw^2$ on $\C$ descends to the required quadratic differential $q$ on $\C^\ast = \C/\mathbb{Z}$.  This handles the case (a). 

Finally, for (b), recall from the compatibility of transverse measures that $\tau_V = \tau \sin\theta$ and  $\tau_H = \tau \lvert \cos \theta \rvert$ for some $\tau>0$ and $\theta \in (0, \pi)$ is the asymptotic direction at the poles. The quadratic differential $\tilde{q} = a^2 dz^2$ where $a = \tau e^{-i\theta} $ descends to the desired $q$ on $\C^\ast$.
\end{proof}

\section{Proofs of Theorems \ref{thm2} and  \ref{thm3}} 
In this section, let $S$ be a surface of genus $g$ and $k\geq 1$ labelled punctures, where $2-2g-k<0$, that is, $S$ has negative Euler characteristic. We fix a $k$-tuple $\mathfrak{n} = (n_1,n_2,\ldots, n_k)$ such that each $n_i\geq 2$.
Our proofs shall use some of the constructions of singular-flat metrics described in \S4.2.

\subsection{Proof of Theorem \ref{thm2}} 
From the statement of  Theorem \ref{thm2}, we are given a pair of measured foliations $(\mathcal{H}$, $\mathcal{V}) \in \mathcal{MF}_g(\mathfrak{n}) \times \mathcal{MF}_g(\mathfrak{n})$ where the set of asymptotic directions of the measured foliations in the first and second factors are $\mathfrak{a}$ and $\sqrt{-1}\cdot  \mathfrak{a}$ respectively. We also know that the pair $\mathcal{H}, \mathcal{V}$ are compatible in the sense of Definition \ref{compat2}. Our task, then, is to construct a meromorphic quadratic differential $q \in Q_g(\mathfrak{n})$ whose horizontal and vertical foliations are (equivalent to) $\mathcal{H}$ and $\mathcal{V}$ respectively. 
We shall do this in the following the same strategy as the construction in Proposition \ref{prop2} in \S4.2.

\begin{proof}[Proof of Theorem \ref{thm2}] 
From the proof of Proposition \ref{mfgn-prop}, the surface $S$ can be decomposed into punctured-disk neighborhoods $\{U_i\}_{1\leq i\leq k}$ of each puncture, a surface-with-boundary  $S^\prime = S \setminus (U_1\cup U_2 \cup \cdots U_k)$. The measured foliations $\mathcal{H}$ and $\mathcal{V}$ restrict to measured foliations on $S^\prime$ that we denote by $H_0$ and $V_0$ respectively. Moreover, on each $U_i$ for $1\leq i\leq k$, the restrictions $\mathcal{H}\vert_{U_i}$ and $\mathcal{V}\vert_{U_i}$ are model foliations $H_i,V_i \in \mathcal{P}_{n_i}$. Moreover, the restrictions $\mathcal{H}\vert_{A_i}$ and $\mathcal{V}\vert_{A_i}$ define foliations $H_i^0,V_i^0$ on an annulus $A_i$ that is a collar of the boundary circle $\partial U_i$. 

As in the proof of Proposition \ref{prop2}, by Lemmas \ref{lem1a} and \ref{lem1b}, we can construct a flat metric on each $A_i$ with horizontal and vertical foliations $H_i^0,V_i^0$ respectively. Similarly, by Lemmas \ref{lem2a} and \ref{lem2b}, we can construct a singular-flat metric on $U_i \setminus A_i \cong \mathbb{D}^\ast$  (induced by a quadratic differential $q_i$ with a pole of order $n_i$ at the puncture) whose horizontal and vertical foliations are $H_i$ and $V_i$ respectively.  For each $i$, we call this singular-flat punctured disk $D_i$. It follows from the proofs of these Lemmas that one can choose each $q_i$ such that the boundary component shared by $A_i$ and $D_i$ is geodesic of the same length, such that the prescribed horizontal foliations intersect it at the same angle. We also impose that $q_i$ has an asymptotic direction at pole-singularity at the $i$-th puncture given by the corresponding entries of $\mathfrak{a}$. By the uniqueness statements in these Lemmas, the set of singular-flat annuli and punctured-disks thus obtained, are uniquely determined by $\mathcal{H}$ and $\mathcal{V}$. 

On the surface-with-boundary $S^\prime$, we can construct a singular-flat metric realizing the pair $H_0$ and $V_0$, by reducing to the compact surface case by a doubling across the boundaries.  Namely, consider the closed surface $\hat{S}$ obtained by taking two copies of $S^\prime$, and identifying the corresponding boundary components such that the closed surface obtained is orientable. This identification along the boundary components does not involve any further twist;  if $\gamma_i$ is the simple closed curve arising from the $i$-th boundary component after identification, then there is a diffeomorphism $\phi: \hat{S}\to \hat{S}$ of order two, that fixes each $\gamma_i$ pointwise and locally, is a reflection across them. We can assume, after an isotopy, that for each boundary component $\partial U_i$ of $S^\prime$, the foliations $H_0$ and ${V}_0$ are either orthogonal to $\partial U_i$ or parallel to it. Let $\widehat{H_0}$ and $\widehat{V_0}$ be the measured foliations on $\hat{S}$, invariant under $\phi$, obtained by doubling $H_0$ and $V_0$ respectively
Note that since the original foliations $\mathcal{H}$ and $\mathcal{V}$ are compatible, the transverse measures of $H_0, V_0$ around $\partial U_i$ cannot both be zero; hence the measured  foliations $\widehat{H_0}$ and $\widehat{V_0}$ we obtain on the closed surface $\hat{S}$ are transverse.  Then there exists a unique holomorphic quadratic differential $\hat{q}$ (with respect to some complex structure) on the closed surface $\hat{S}$, whose horizontal and vertical foliations are equivalent to  $\widehat{H_0}$ and $\widehat{V_0}$ respectively.  (See, for example, the proof of Theorem 4.7 of \cite{Kerckhoff} and the references therein.) Since these prescribed foliations are invariant under $\phi$, it follows from the uniqueness that $\phi$ is an involutive isometry on the induced singular-flat surface. In particular, the quotient by $\phi$ yields a singular-flat metric on $S^\prime$ with geodesic boundary, whose horizontal and vertical foliations are equivalent to $H_0$ and $V_0$ respectively.  

Note that in our preceding construction, the boundary components of singular-flat metric on $S^\prime$ are either completely horizontal (in the case the transverse measure of $H_0$ around it is zero)  or completely vertical  (in the case that transverse measure is positive). In what follows, we show how to further ensure that the horizontal foliation intersects the $i$-th boundary component at the same angle, as that of the horizontal foliation on the flat $A_i$ that was constructed earlier. Note that this modification is needed only if the transverse measures of both $H_0$ and $V_0$ around the $i$-th boundary component are positive; we call the desired angle $\beta_i\in (0,\pi)$. 

For each $i$, take a semi-infinite Euclidean cylinder $R_i$, such that the boundary $\partial R_i$ is either completely horizontal or completely vertical (matching with the $i$-th boundary component $C_i$ on $S^\prime$), and identify $\partial R_i$ with $C_i$ with an isometry  that does not introduce any further twists.  We thus obtain a complete singular-flat surface $\hat{S}$ with cylindrical ends; not that the horizontal and vertical foliations extend to the whole surface. Now, for each $i$, consider the maximal (with respect to inclusion) open semi-infinite Euclidean cylinder $E_i$ that is  isometrically embedded in the $i$-th end, such that the horizontal foliation intersects the geodesic boundary $\partial E_i$ at an angle $\beta_i$ (\textit{c.f.} Figure 7). Excising each $E_i$, we obtain the desired singular-flat surface $S^{\prime\prime}= \hat{S} \setminus (E_1 \cup E_2 \cup \cdots E_k)$ that realizes the horizontal and vertical foliations $H_0,V_0$, but now has the horizontal foliation intersecting each boundary component at a prescribed angle. Note that it is possible that $S^{\prime\prime}$ is topologically not a surface, but has degeneracies; this happens in the case that the closures of, say $E_i$ and $E_j$ intersect along a common boundary arc  (\textit{c.f.} the remark following Lemma \ref{lem1b}). In that case, we shall continue to call $S^{\prime\prime}$ a singular-flat surface, despite such degeneracies. 

\begin{figure}
    \centering
 \includegraphics[scale=0.53]{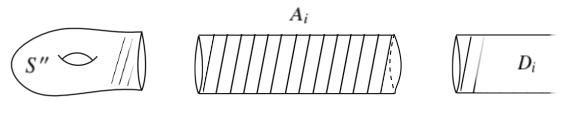}\\
  \caption{The decomposition used in the proofs of Theorems \ref{thm2} and \ref{thm3}. (The figure assumes that there is only one puncture.)}
  \label{singfig}
\end{figure}

It remains to glue these singular-flat surfaces $\{D_i, A_i\}_{1\leq i\leq k}$ and $ S^{\prime\prime}$  along their respective boundaries, as determined by the decomposition of $S$ into $\{U_i \setminus A_i, A_i\}_{1\leq i\leq k}$ and $S^\prime$,  to obtain the singular-flat metric on $S$ with horizontal and vertical foliations $\mathcal{H}$ and $\mathcal{V}$. (See Figure 9.) As in the last part of the proof of Proposition \ref{prop2}, the only freedom in this gluing is an integer parameter $d_i \in \mathbb{Z}$ that measures the relative twist between  $D_i$ and $S^{\prime\prime}$ across $A_i$, calculated as the difference in the number of Dehn-twists in the gluings of the corresponding boundary components. Recall that the flat metric on $A_i$ realizes the difference of the twist parameters of $\mathcal{H}$ and $\mathcal{V}$ at the $i$-th puncture; each $d_i$ is chosen such that the singular-flat metric on $S$ has the correct marking, and realizes the actual twist parameters for these foliations. 

The singular-flat subsurfaces we obtained were unique, and so is the integer twist parameter $d_i$ for each $i$, such a singular-flat metric  on $S$, and hence the corresponding quadratic differential $q \in Q_0(n,m)$ is unique.
\end{proof} 

\medskip

\subsection{Proof of Theorem \ref{thm3}} 

We now prove the analogue of the Hubbard-Masur Theorem (\cite{HubbMas}) for meromorphic quadratic differentials. As discussed in \S1, in contrast with the version proved in the work of Gupta-Wolf, our result dispenses with the need to choose a coordinate disk around each puncture.  The proof below uses the constructions in \S4.2 to reduce to the case when all poles are of order two, where one can use the main result of \cite{GuptaWolf0}. Note that the latter result does not depend on such a choice of coordinate disk either, since the ``residue" at a pole of order two is coordinate-independent. 

In what follows, we shall fix $X\in  \widehat{\T}_{g,k}$;  recall that $X$ represents a Riemann surface structure on the punctured surface $S$, that we shall denote by $\overline{X}$, together with the additional data of a real twist parameter at each puncture, that we record as a $k$-tuple $\mathfrak{S} = (s_1, s_2,\ldots, s_k)$.  

Recall that throughout this paper,  markings of $S$ are considered up to an isotopy that fixes a ``framing" at each puncture, or alternatively, fixes (pointwise) the boundary circles obtained by a real blow-up at each puncture.  The twist parameter $s_i$ then records the data of the framing and the marking at the punctures, as follows (\textit{c.f.} Definition 3.3. of \cite{GuptaMj1}) :
\begin{itemize}
 \item[(a)] The direction of a tangent vector $v_i$ at the $i$-th puncture, or alternatively, a point on the circle obtained as a real blowup of the $i$-th puncture, given by $\text{exp}(i 2\pi s_i)$. This determines a framing, namely, the one given by  $v_i$ and $\sqrt{-1} \cdot v_i$.
  \item[(b)] The integer $\lfloor s_i \rfloor$ that denotes the number of Dehn twists about the $i$-th puncture. 
 \end{itemize}

Recall from \S1 that $\pi$ is the projection from $Q_g(\mathfrak{n})$ to  $X\in  \widehat{\T}_{g,k}$;  in what follows $p:  \widehat{\T}_{g,k} \to \T_{g,k}$ will be the further projection that forgets the data of the twist parameters. ( In particular, note that $p(X) = \overline{X}$.) 

As in the hypotheses of Theorem \ref{thm3}, we fix a measured foliation $\mathcal{H} \in  \mathcal{MF}_g(\mathfrak{n})$, and a choice of model foliations $F_i \in \mathcal{P}_{n_i}$ for each $1\leq i\leq k$.  The set of asymptotic directions $\mathfrak{a}$ of $\mathcal{H}$ are determined by $\mathfrak{S}$: namely, at a pole of order $n_i>2$  the asymptotic direction is exactly the tangent directions as in (a) above, and if $n_i=2$, the asymptotic direction is equal to the the angle $2\pi s_i$ (modulo $\pi$).  By compatibility,  the set of asymptotic directions of $(F_1,F_2,\ldots, F_k)$ is the opposite set $\sqrt{-1}\cdot \mathfrak{a}$ (see Definition \ref{opp}). 

Moreover, the integer parameters determined by $\mathfrak{S}$ as in (b) above, are also required to match with integer twist parameters of any measured foliation $\mathcal{F}$ on a punctured surface equipped with a marking, defined as follows:

\begin{defn}[Integer twist parameter]\label{itw}  Recall that a measured foliation $\mathcal{F}$ on $S$  restricts to a model foliation on the punctured disk $D_i$ that is a neighborhood of the $i$-th pole.  Let $S^\prime = S\setminus (D_1,D_2,\ldots, D_k)$ be the surface-with-boundary obtained by deleting these neighborhoods. 
In case the transverse measure of $\mathcal{F}$ around the $i$-th puncture is positive, the integer parameter of $\mathcal{F}$ associated with that puncture on $X$ is the number of Dehn-twists required in the gluing of $D_i$ to the corresponding boundary component of $S_i$, in order to obtain the chosen marking.   In the case that the transverse measure of $\mathcal{F}$ around the $i$-th puncture is zero, the integer twist parameter is ill-defined, so we ignore such punctures. This is because in that case, the $i$-th boundary component of $S^\prime$ is the boundary of a (possibly degenerate) ring-domain, and changing $\mathcal{F}$ by a Dehn-twist around that puncture yields an equivalent measured foliation. 
\end{defn}

Thus, in our case, we shall assume the integer twist parameters of $\mathcal{H}$ on $X$ are  equal to the integer parameters determined by $\mathfrak{S}$ as in (b) above.  Note that $\mathfrak{S}$ also determines the integer twist parameters of the \textit{vertical} foliation $\mathcal{V}$ of the meromorphic quadratic differential $q\in  Q_g(\mathfrak{n})$ that we are aiming to construct.

\begin{proof}[Proof of Theorem \ref{thm3}] 

Let $\mathcal{V}$ be the vertical foliation of the desired meromorphic quadratic differential $q\in  Q_g(\mathfrak{n})$; since the horizontal foliation of $q$ would be $\mathcal{H}$, note that by Theorem \ref{thm2}, $q$ is uniquely determined by the pair $(\mathcal{H},\mathcal{V})$. The model foliations for $\mathcal{V}$ would be the prescribed foliations $F_1,F_2,\ldots, F_k$ in punctured-disk neighborhoods $D_1,D_2,\ldots, D_k$, respectively, around the punctures. It thus remains to specify the foliation $V_0$ on the surface-with-boundary $S^\prime = S\setminus (D_1,D_2,\ldots, D_k)$. 

Recall that we want $\pi(q) = X$, where $X \in  \widehat{\T}_{g,k}$ is a punctured Riemann surface equipped with a framing at the punctures, and a marking that remembers the number of Dehn-twists around each puncture, the data of which is encoded by the set $ \mathfrak{S}$.  From the discussion above, $ \mathfrak{S}$ determines the asymptotic directions of the model foliations (of either foliation) on $D_i$, and the number of Dehn-twists in the gluing of $D_i$ to the corresponding boundary component of $S^\prime$. These in turn determine the real twist parameters of $\mathcal{H}$ and $\mathcal{V}$ at the punctures, as described in the the proof of Proposition \ref{mfgn-prop} ; we denote them by $\hat{\sigma} = (\sigma_1,\sigma_2,\ldots, \sigma_k)$ and $\hat{\sigma}^\prime = (\sigma^\prime_1,\sigma^\prime_2,\ldots, \sigma^\prime_k)$ respectively.

By Lemmas \ref{lem1a} and \ref{lem1b}, for each $1\leq i\leq k$, there is a  unique flat annulus $A_i$ with its metric induced by a constant quadratic differential,  such that its horizontal and vertical foliations have transverse measures equal to those of $F_i^H$ and $F_i$ around $\partial D_i$, and  the difference of their twist parameters equals $\sigma_i - \sigma_i^\prime$, where $\sigma_i$ and $\sigma_i^\prime$ are as defined above.  Let $\beta_i \in [0,\pi)$ be the angle at which the horizontal foliation intersects the boundary components of $A_i$.  Note that if one of the transverse measures around the $i$-th pole is zero, then $\beta_i$ is necessarily $0$ or $\pi/2$. 

By Lemma \ref{lem2a} (if $n_i=2$) or \ref{lem2b} (if $n_i>2$), there is a unique singular-flat metric on each $D_i$, induced by a meromomorphic quadratic differential $q_i$, such that  
\begin{itemize}
\item[(a)] the horizontal and vertical foliations of $q_i$ are $F_i^H$ and $F_i$ respectively,  
\item[(b)] the asymptotic directions of $q_i$ are those prescribed by $\mathfrak{a}$, and 
\item[(c)] the horizontal foliation $F_i^H$ intersects the geodesic boundary $\partial D_i$ at an angle $\beta_i$, as defined above. 
\end{itemize} 

From the proofs of these Lemmas, the length of the boundaries of $A_i$ and $D_i$ are equal (they only depend on the transverse measures of $F_i^H$ and $F_i$ around $\partial D_i$, and the angle $\beta_i$). We identify each $\partial D_i$ with one of the boundary components of $A_i$ by an isometry to obtain a singular-flat disk that we denote by $U_i$. The only freedom is the number of Dehn-twists in this gluing; however this is determined by $\mathfrak{S}$, i.e.\ these are chosen such that the twist parameters of the foliations are precisely $\hat{\sigma}$ and $\hat{\sigma}^\prime$. Since the horizontal foliations intersect each boundary at the same angle by (c) above, the singular flat metric on $U_i$ is induced by a meromorphic quadratic differential $q_i$ on a punctured disk $\D^\ast$.

Let $\hat{\tau} = (\tau_1,\tau_2,\ldots, \tau_k)$ be the transverse measures of $F_1,F_2,\ldots, F_k$ respectively, around the boundaries of the corresponding punctured disks. Let $\mathcal{MF}_{g,k}(\hat{\tau})$ be the space of measured foliations on a compact surface of genus $g$ and $k$ labelled boundary components, such that the transverse measures around the boundary components are given by the $k$-tuple $\hat{\tau}$.  By Proposition \ref{mfb}, since we are fixing the parameters corresponding to the transverse measures of the boundary, the space $\mathcal{MF}_{g,k}(\hat{\tau}) \cong \mathbb{R}^{6g-6 + 2k}$. 

Let $H_0$ be the restriction of $\mathcal{H}$ to the surface-with-boundary $S^\prime$. Given $V_0 \in \mathcal{MF}_{g,k}(\hat{\tau})$, there is a unique  singular-flat metric on $S^\prime$  realizing the pair $(H_0,V_0)$ as its horizontal and vertical foliations, obtained by a doubling across the boundaries to get a closed surface and applying the Hubbard-Masur theorem, exactly as in the proof of Theorem \ref{thm2}.  Recall that the resulting horizontal and vertical foliations are either orthogonal or parallel to  each boundary component, depending on whether its transverse measure around the boundary is zero or positive, respectively.  However, we can make the horizontal foliation at the $i$-th boundary component intersect at the angle $\beta_i$  exactly as in the proof of Theorem \ref{thm2}, namely by appending a cylindrical end $R_i$, and truncating it along a suitable open sub-cylinder $E_i$ bounded by a geodesic circle  that intersects the horizontal foliation at the desired angle $\beta_i$.  Let $S^{\prime\prime}$ be the resulting singular-flat surface (with possible degeneracies when the closures of the sub-cylinders intersect). Recall that in Lemmas \ref{lem2a} and \ref{lem2b}, the angle $\beta_i$ was achieved by exactly the same truncation of a cylindrical end. Hence,  the length of the resulting geodesic boundary component  $\partial E_i$ of $S^{\prime\prime}$ matches the length of the boundary of $U_i$ obtained above.

Thus, we can identify these  boundaries (i.e.\ ``cap off" the $i$-th boundary component in $S^{\prime\prime}$ by the singular-flat punctured-disk $U_i$) to  obtain a singular-flat metric on $S$ induced by a meromorphic quadratic differential $q\in  Q_g(\mathfrak{n})$. (See Figure 9.)  Note that in this gluing we do not introduce any additional twist, since the twist parameter at the $i$-th puncture has already been taken care of by the gluing between $A_i$ and $D_i$.
Thus, by an appropriate gluing, the resulting marking on $S$ is the one on $X$, and the horizontal foliation of $q$ is exactly $\mathcal{H}$. From our construction, the vertical foliation of $q$ restricts to the desired model foliations $F_1,F_2,\ldots, F_k$ at the respective punctures.

Let $\Psi: \mathcal{MF}_{g,k}(\hat{\tau}) \to \T_{g,k}$  be the map defined by  $\Psi(V_0) = p\circ \pi (q)$ where $q\in  Q_g(\mathfrak{n})$ is the quadratic differential obtained from the construction we just described.  It only remains to show that in this construction, there is a unique initial choice of foliation $V_0$ such that the Riemann surface underlying $q$ is the desired one, i.e.\ $\Psi(V_0) = \overline{X}$.  This is immediate from the following:\\

\noindent \textit{Claim. The map $\Psi$ is a homeomorphism.} \\
\textit{Proof of claim.}   The continuity of $\Psi$ follows from the fact that in the construction above,  the singular-flat surface $S^{\prime\prime}$ depends continuously on $V_0$ from the continuity of the Hubbard-Masur map. Since both the target and domain are homeomorphic to $\mathbb{R}^{6g-6 + 2k}$, it suffices to show, by the invariance of domain, that $\Psi$ is a bijection.  

Consider the map $\Psi_0:  \mathcal{MF}_{g,k}(\hat{\tau}) \to \T_{g,k}$ that ``caps off" the boundary components of $S^{\prime\prime}$ in a different way, by attaching cylindrical ends as we now describe.  Namely, given $V_0$, construct the singular-flat surface $S^{\prime\prime}$ realizing the pair $(H_0,V_0)$ exactly as above. Then, attach the boundary of a semi-infinite Euclidean cylinder $C_i$ to the $i$-th boundary component of $S^\prime$ where $C_i$ is chosen to have a circumference equal to the length of that boundary component, and $C_i$ is equipped with a holomorphic quadratic differential whose horizontal foliation intersects $\partial C_i$ at an angle $\beta_i \in (0,\pi)$, or at zero angle,  depending on whether  the corresponding transverse measure is positive or zero respectively, or equivalently, whether $\tau_i=0$ or $\tau_i>0$ respectively.  (This looks like gluing in the shaded cylinder in Figure 7.) This defines a singular-flat metric on $S$ induced by a meromorphic quadratic differential $q_0 \in Q_g(\mathfrak{n}_0)$ where $\mathfrak{n}_0 = \underbrace{(2,2,\ldots,2)}_{k\ times}$ . We then define $\Psi_0(V_0) = p \circ \pi_0 (q_0)$, where $\pi_0$ and $p$ are the forgetful projections $\pi_0: Q_g(\mathfrak{n}_0) \to \widehat{\T}_{g,k}$ and $p: \widehat{\T}_{g,k} \to \T_{g,k}$.  

Note that  $C_i$ and $U_i$ are defined by two different quadratic differentials on the punctured-disk $\mathbb{D}^\ast$ such that the induced metric on the boundary circle $\partial \mathbb{D}$ is identical; thus, the two different ``capping off" constructions in $\Psi_0$ and $\Psi$ involve exactly the same identification map on the boundary circles. Thus, the resulting surfaces are conformally the same, i.e. the punctured Riemann surfaces underlying $q_0$ and $q$ are identical, and we have $\Psi_0 = \Psi$. 

Now let $\mathcal{H}^\prime \in \mathcal{MF}_g(\mathfrak{n}_0)$ be the measured foliation on $S$ obtained by extending $H_0$ on $S^\prime$ as follows: attach cylindrical ends to the boundary components of $S^{\prime\prime}$, as above, and extend $H_0$ on each cylindrical end by geodesic lines spiralling down the end (if the corresponding  transverse measure of $H_0$ is positive) or meridional circles (if the corresponding transverse measure of $H_0$ is zero).  Note that the asymptotic directions of $\mathcal{H}^\prime$ at the $i$-th puncture is $\beta_i$.  In the above construction,  the quadratic differential $q_0$ on the punctured Riemann surface $\overline{X} = \Psi_0(V_0)$ has  (a) horizontal foliation $\mathcal{H}^\prime$, and (b) a residue at the $i$-th pole that is prescribed by $\tau_i$ and $\beta_i$. By Theorem 1.2 of \cite{GuptaWolf0}, there exists a unique  meromorphic quadratic differential $q_0$ on $\overline{X}$ satisfying (a) and (b).  The existence of such a $q_0$ implies that $\Psi_0$ is surjective: one can obtain a $V_0$ such that $\Psi_0(V_0) = \overline{X}$ by truncating the cylindrical ends of the $q_0$-metric on $S$, and defining $V_0$ to be the vertical foliation of the resulting surface-with-boundary. 
The uniqueness of $q_0$ implies that  $\Psi_0$ is injective: if  $\Psi_0(V_0) = \Psi_0(V_0^\prime) = \overline{X}$, then the corresponding quadratic differentials $q_0$ and $q_0^\prime$ obtained in the construction are identical; then so are their vertical foliations, and consequently $V_0 =V_0^\prime$.  This proves the bijectivity of $\Psi_0$, and consequently of $\Psi$, and concludes the proof.
$\qed$

\end{proof}

\bibliographystyle{amsalpha}
\bibliography{Reference}

\providecommand{\bysame}{\leavevmode\hbox to3em{\hrulefill}\thinspace}
\providecommand{\MR}{\relax\ifhmode\unskip\space\fi MR }
\providecommand{\MRhref}[2]{%
  \href{http://www.ams.org/mathscinet-getitem?mr=#1}{#2}
}
\providecommand{\href}[2]{#2}
\begin{thebibliography}{ALPS16}

\bibitem[Ahl35]{Ahlfors-paper}
Lars Ahlfors, \emph{Sur le type d'une surface de {R}iemann}, C.R. Acad. Sci.
  Paris \textbf{201} (1935), 30--32.

\bibitem[ALPS16]{ALPS}
D.~Alessandrini, L.~Liu, A.~Papadopoulos, and W.~Su, \emph{The horofunction
  compactification of {T}eichm\"{u}ller spaces of surfaces with boundary},
  Topology Appl. \textbf{208} (2016), 160--191.

\bibitem[AW06]{AuWan2}
Thomas Kwok-Keung Au and Tom Yau-Heng Wan, \emph{Prescribed horizontal and
  vertical trees problem of quadratic differentials}, Commun. Contemp. Math.
  \textbf{8} (2006), no.~3, 381--399.

\bibitem[BD10]{BranDias}
Bodil Branner and Kealey Dias, \emph{Classification of complex polynomial
  vector fields in one complex variable}, J. Difference Equ. Appl. \textbf{16}
  (2010), no.~5-6, 463--517.

\bibitem[BS15]{Bridgeland-Smith}
Tom Bridgeland and Ivan Smith, \emph{Quadratic differentials as stability
  conditions}, Publ. Math. Inst. Hautes \'{E}tudes Sci. \textbf{121} (2015),
  155--278.

\bibitem[DES]{DES}
A.~Douady, F.~Estrada, and P.~Sentenac, \emph{Champs de vecteurs polynomiaux
  sur $\mathbb{C}$ \emph{(Unpublished manuscript)}}.

\bibitem[Dia13]{Dias}
Kealey Dias, \emph{Enumerating combinatorial classes of the complex polynomial
  vector fields in {$\Bbb C$}}, Ergodic Theory Dynam. Systems \textbf{33}
  (2013), no.~2, 416--440.

\bibitem[FLP12]{FLP}
Albert Fathi, Fran{\c{c}}ois Laudenbach, and Valentin Po{\'e}naru,
  \emph{Thurston's work on surfaces}, Mathematical Notes, vol.~48, Princeton
  University Press, Princeton, NJ, 2012, Translated from the 1979 French
  original by Djun M. Kim and Dan Margalit.

\bibitem[Gar87]{Gard}
Frederick~P. Gardiner, \emph{Teichm\"uller theory and quadratic differentials},
  Pure and Applied Mathematics (New York), John Wiley \& Sons, Inc., New York,
  1987, A Wiley-Interscience Publication.

\bibitem[GM]{GuptaMj1}
Subhojoy Gupta and Mahan Mj, \emph{Meromorphic projective structures, grafting
  and the monodromy map}, {\it preprint, arXiv:1904.03804}.

\bibitem[GM91]{GardMas}
Frederick~P. Gardiner and Howard Masur, \emph{Extremal length geometry of
  {T}eichm\"uller space}, Complex Variables Theory Appl. \textbf{16} (1991),
  no.~2-3, 209--237.

\bibitem[GW17]{GuptaWolf0}
Subhojoy Gupta and Michael Wolf, \emph{Meromorphic quadratic differentials with
  complex residues and spiralling foliations}, In the tradition of
  {A}hlfors-{B}ers. {VII}, Contemp. Math., vol. 696, Amer. Math. Soc.,
  Providence, RI, 2017, pp.~153--181.

\bibitem[GW19]{GuptaWolf2}
\bysame, \emph{Meromorphic quadratic differentials and measured foliations on a
  {R}iemann surface}, Mathematische Annalen \textbf{373} (2019), no.~1,
  73--118.

\bibitem[HM79]{HubbMas}
John Hubbard and Howard Masur, \emph{Quadratic differentials and foliations},
  Acta Math. \textbf{142} (1979), no.~3-4, 221--274.

\bibitem[Jen58]{Jenk}
James~A. Jenkins, \emph{Univalent functions and conformal mapping}, Ergebnisse
  der Mathematik und ihrer Grenzgebiete. Neue Folge, Heft 18. Reihe: Moderne
  Funktionentheorie, Springer-Verlag, Berlin-G\"ottingen-Heidelberg, 1958.

\bibitem[Jen72]{Jenkins2}
\bysame, \emph{A topological three pole theorem}, Indiana Univ. Math. J.
  \textbf{21} (1971/72), 1013--1018.

\bibitem[Ker92]{Kerckhoff}
Steven~P. Kerckhoff, \emph{Lines of minima in {T}eichm\"{u}ller space}, Duke
  Math. J. \textbf{65} (1992), no.~2, 187--213.

\bibitem[MP98]{MulPenk}
M.~Mulase and M.~Penkava, \emph{Ribbon graphs, quadratic differentials on
  {R}iemann surfaces, and algebraic curves defined over {$\overline{\bold
  Q}$}}, Asian J. Math. \textbf{2} (1998), no.~4, 875--919, Mikio Sato: a great
  Japanese mathematician of the twentieth century.

\bibitem[SN70]{Sario}
L.~Sario and M.~Nakai, \emph{Classification theory of {R}iemann surfaces}, Die
  Grundlehren der mathematischen Wissenschaften, Band 164, Springer-Verlag, New
  York-Berlin, 1970.

\bibitem[Str84]{Streb}
Kurt Strebel, \emph{Quadratic differentials}, Ergebnisse der Mathematik und
  ihrer Grenzgebiete (3) [Results in Mathematics and Related Areas (3)],
  vol.~5, Springer-Verlag, 1984.

\end{thebibliography}

\end{document}